\documentclass[12 pt]{article}
\usepackage{fullpage}

\usepackage{lipsum}
\usepackage{amsfonts}
\usepackage{graphicx}
\usepackage{epstopdf}
\usepackage{standalone}
\usepackage{algcompatible}

\ifpdf
  \DeclareGraphicsExtensions{.eps,.pdf,.png,.jpg}
\else
  \DeclareGraphicsExtensions{.eps}
\fi

\usepackage{amsopn}

\usepackage{epsfig} %
\usepackage{amsmath} %
\usepackage{amssymb}  %
\usepackage{bm}
\DeclareMathAlphabet{\mathcal}{OMS}{cmsy}{m}{n}
\usepackage[english]{babel}
\usepackage{url}
\hyphenation{op-tical net-works semi-conduc-tor}
\usepackage{algorithm}
\usepackage{color}
\usepackage{pgfplots}
\usepackage{siunitx}
\usepackage{tikz}
\usepackage{tkz-euclide}
\usepackage{chngcntr}
\usepackage{verbatim}
\usepackage{enumerate}

\definecolor{ao(english)}{rgb}{0.0, 0.5, 0.0}
\usepackage[colorlinks, citecolor = {ao(english)}, linkcolor = {ao(english)}]{hyperref} 
\usepackage{cleveref}
\usepackage{aliascnt}
\usetikzlibrary{calc}
\usepackage{subcaption}
\usepackage{tikz}
\usepackage{pgfplots}
\usetikzlibrary{arrows,shapes,trees,calc,positioning,patterns,decorations.pathmorphing,decorations.markings}
\usetikzlibrary{matrix}
\usepgfplotslibrary{groupplots}
\pgfplotsset{compat=newest}
\usepackage{amsthm}
\usepgfplotslibrary{patchplots}

\crefname{figure}{Fig.}{Fig.}

\newtheorem{thm}{Theorem}
\crefname{thm}{Theorem}{Theorems}

\newtheorem{prop}{Proposition}
\crefname{prop}{Proposition}{Propositions}
\newtheorem{lem}{Lemma}
\crefname{lem}{Lemma}{Lemmas}
\newtheorem{cor}{Corollary}
\crefname{cor}{Corollary}{Corollaries}
\theoremstyle{remark}
\newtheorem{rem}{Remark}
\crefname{rem}{Remark}{Remarks}

\theoremstyle{definition}
\newtheorem{example}{Example}
\crefname{example}{Example}{Examples}

\crefname{ass}{Assumption}{Assumption}
\usepackage{dsfont}
\let\mathbb=\mathds
\usepackage[numbers,sort&compress]{natbib}

\crefname{conj}{Conjecture}{Conjectures}

\theoremstyle{definition}
\newtheorem{defn}{Definition}
\crefname{defn}{Definition}{Definitions}

\crefname{prob}{Problem}{Problems}
\crefname{algorithm}{Algorithm}{Algorithms}

\newcommand{\Rmn}{\mathbb{R}^{n \times m}}

\newcommand{\Rnn}{\mathbb{R}^{n \times n}}

\newcommand{\rk}{\textnormal{rank}}

\newcommand{\diag}{\textnormal{diag}}

\newcommand{\transp}{\mathsf{T}}

\newcommand{\tp}[1]{{#1\text{-positive}}}

\newcommand{\Hank}[1]{\mathcal{H}_{#1}}

\newcommand{\lint}{\ell_1}
\newcommand{\linf}{\ell_\infty}

\newcommand{\compound}[2]{#1_{[#2]}}
\newcommand{\Con}[1]{{\mathcal{C}^{#1}}}
\newcommand{\Obs}[1]{{\mathcal{O}^{#1}}}

\colorlet{FigColor1}{blue}
\colorlet{FigColor2}{red}
\colorlet{FigColor3}{ao(english)}
\colorlet{FigColor4}{orange}
\pgfplotsset{every axis plot/.append style={line width=1.5pt}}

	\definecolor{bluebell}{rgb}{0.64, 0.64, 0.82}
		\definecolor{airforceblue}{rgb}{0.36, 0.54, 0.66}
	
\crefformat{equation}{\textup{#2(#1)#3}}
\crefrangeformat{equation}{\textup{#3(#1)#4--#5(#2)#6}}
\crefmultiformat{equation}{\textup{#2(#1)#3}}{ and \textup{#2(#1)#3}}
{, \textup{#2(#1)#3}}{, and \textup{#2(#1)#3}}
\crefrangemultiformat{equation}{\textup{#3(#1)#4--#5(#2)#6}}%
{ and \textup{#3(#1)#4--#5(#2)#6}}{, \textup{#3(#1)#4--#5(#2)#6}}{, and \textup{#3(#1)#4--#5(#2)#6}}

\Crefformat{equation}{#2Equation~\textup{(#1)}#3}
\Crefrangeformat{equation}{Equations~\textup{#3(#1)#4--#5(#2)#6}}
\Crefmultiformat{equation}{Equations~\textup{#2(#1)#3}}{ and \textup{#2(#1)#3}}
{, \textup{#2(#1)#3}}{, and \textup{#2(#1)#3}}
\Crefrangemultiformat{equation}{Equations~\textup{#3(#1)#4--#5(#2)#6}}%
{ and \textup{#3(#1)#4--#5(#2)#6}}{, \textup{#3(#1)#4--#5(#2)#6}}{, and \textup{#3(#1)#4--#5(#2)#6}}

\crefdefaultlabelformat{#2\textup{#1}#3}

\hyphenation{op-tical net-works semi-conduc-tor}

\begin{document}

\title{Balanced truncation of $k$-positive systems
}
\author{Christian Grussler, Tobias Damm and Rodolphe Sepulchre 
	\thanks{The first author is with the Department of Electrical
          Engineering and Computer Sciences at UC Berkeley,
          Berkeley, CA 94720, USA, {\tt\small
            christian.grussler@berkeley.edu}. The second author is
          with the Department of Mathematics at TU Kaisers\-lautern,
          Gottlieb-Daimler-Stra\ss{}e 48, and Fraunhofer
ITWM, 67663 Kaiserslautern, Germany, {\tt\small  damm@mathematik.uni-kl.de}. The third author is with the Control Group at the Department of Engineering, University of Cambridge, Trumpington Street, Cambridge CB2 1PZ, United Kingdom, {\tt\small  r.sepulchre @eng.cam.ac.uk}.
		}
	}

\maketitle

	\begin{abstract}
This paper considers balanced truncation of discrete-time Hankel $k$-positive systems, characterized by Hankel matrices whose minors up to order $k$ are nonnegative. Our main result shows that if the truncated system has order $k$ or less, then it is Hankel totally positive ($\infty$-positive), meaning that it is a sum of first order lags. This result can be understood as a bridge between two known results: the property that the first-order truncation of a positive system is positive ($k=1$), and the property that   balanced truncation preserves state-space symmetry. It provides a broad class of systems where balanced truncation is guaranteed to result in a minimal internally positive system. 	
	
\end{abstract}

\section{Introduction}
Model order reduction aims at facilitating analysis, design, and implementation of systems by finding simpler lower order approximations. Standard techniques such as balanced truncation provide qualitatively good approximations in reproducing the input-output behaviour. {{But it is widely unclear in which cases these approximations can be realized through the parallel interconnection of first order lags only.}} Such approximations, also known as relaxation systems \cite{willems1976realization}, have been of considerable interest as they are passive, externally (input-output) positive and as recently shown often admit sparse, scalable
        optimal controllers \cite{pates2019optimal}. While balanced truncation and optimal Hankel norm
        approximation are known to preserve this property (in continuous-time) for any
        order \cite{liu1998model}, it is an open question in which cases this
        property can be gained from non-relaxation systems. Here, we
        provide a first answer by showing that balanced truncation of
        so-called \emph{Hankel $k$-positive single-input-single-output
          (SISO) systems} yields such approximations, if the reduced
        order is no larger than $k$.  
        
	{In discrete-time, Hankel $k$-positive systems are defined as systems whose Hankel operator has a $k$-positive matrix representation, i.e., all its minors of order up to $k$ are nonnegative.} %
    For example, Hankel $\tp{1}$ systems correspond to the well-known (strictly proper) externally (input-output) positive systems \cite{Farina2000}. %
        
    As recently discovered in \cite{grussler2020variation}, {under a mild multiplicity assumption, Hankel $k$-positive systems are dominated by relaxation systems of order $k$, i.e., after a partial fraction decomposition, the sum corresponding to the $k$ largest poles in magnitude has a relaxation system structure. This forms a bridge between externally positive (one dominant first order lag) and relaxation systems (sum of first order lags).} By our main result, balanced truncation preserves the structure of the  dominant parts of the system. Specifically,  external positivity is preserved by truncation of SISO systems to order~$1$. 
               
Many externally positive systems are modelled by internally positive realizations, i.e., system matrices with nonnegative entries. This property 
is appealing in scalable stability analysis
\cite{farina2011positive,rantzer2015scalable,son1996robust,tanaka2011bounded} and
enjoyed by many compartmental models, e.g., within bio-chemistry,
economics, or transportation,
\cite{farina2011positive,luenberger1979introduction}. Several
methods have been suggested to preserve internal positivity in the reduction process,
\cite{reis2009positivity,feng2010internal,sootla2012scalable}.
Unfortunately, even for relaxation systems these methods {often yield
conservative results, which can be outperformed by balanced truncation
to much lower orders} (see~\cref{sec:example} and \cite{grussler2012symmetry,grussler2012model} for examples). In contrast, %
for the class of Hankel $k$-positive SISO systems, we show that balanced truncation preserves
internal positivity. In the multi-input-multi-output (MIMO)
case, our results remain valid as long as the Hankel operator is
symmetric and its representation matrix is $k$-positive. %
We believe that the framework of
$k$-positivity also provides a natural extension beyond that. The fact that balanced truncation to
order~$1$ preserves internal positivity also for MIMO systems, %
\cite{grussler2012symmetry}, is an indication.

	The paper is organized as follows. In the preliminaries, we review discrete-time systems and the relationship between Kung's algorithm and balanced truncation, which is essential in the proof of our main result. Then, we summarize the relevant parts of the $k$-positivity theory from \cite{grussler2020variation}. \Cref{sec:Main} contains our main results on the truncation of Hankel $k$-positive systems. Finally, we discuss extensions to MIMO systems and conclude with an illustrative example.  

\section{Preliminaries}
	\label{sec:prelim}
	\subsection{Notation}
	\subsubsection{Sets}
	In this work, the set of nonnegative reals and integer are denoted by $\mathds{R}_{\geq 0} = [0,\infty)$ and $\mathds{Z}_{\geq 0} = \mathds{N}_{0}$, respectively. Further, for $k, l \in \mathds{Z}$, we use $(k:l) := \{k,k+1,\dots,l\}$, $k \leq l$.
	\subsubsection{Matrices}
	For real valued matrices $X = (x_{ij}) \in \Rmn$, including
        vectors $x = (x_i) \in \mathbb{R}^n$, we say that $X$ is
        \emph{nonnegative}, $X\ge 0$ or $X \in \Rmn_{\geq 0}$, if
        all elements $x_{ij} \in \mathbb{R}_{\geq 0}$; we use the
        corresponding notation for positive matrices. If $X
        \in \Rnn$, then $\sigma(X) =
        \{\lambda_1(X),\dots,\lambda_n(X)\}$ denotes its
        \emph{spectrum}, where the eigenvalues are ordered by
        descending absolute value, i.e., $\lambda_1(X)$ is the
        eigenvalue with the largest magnitude, counting multiplicity.
        In case that the magnitude of two eigenvalues coincides, we
        sub-sort them by decreasing real part. A matrix $X$ is called \emph{reducible}, if there exists a permutation matrix $P=\begin{pmatrix}
        P_1 & P_2
        \end{pmatrix}$ so that $P_2^TXP_1=0$; otherwise $X$ is
        \emph{irreducible}. We call $X$ \emph{Hankel}, if it is constant along its anti-diagonals. Further, $X$ is \emph{positive semidefinite}, $X \succeq 0$, if $X = X^\transp$ and $\sigma(X) \subset \mathbb{R}_{\geq 0}$. The identity matrix in $\Rnn$ is denoted by $I_n$ and the Moore-Penrose \emph{pseudo-inverse} of $X \in \Rmn$ by $X^\dagger$.
	Finally, a \emph{(consecutive) $j$-minor} of $X$ in $\Rmn$ is
        defined as a minor which is constructed of (consecutive) $j$
        columns and $j$ rows of $X$. The submatrix with rows $I \subset
        [1:n]$ and columns $J \subset [1:m]$ is written as $X_{\{I,J\}}$. 

	\subsubsection{Functions}
We consider functions $g: \mathds{Z} \to \mathbb{R} \cup \{\pm \infty\}$. \emph{Nonnegative functions} $g: \mathds{Z} \to \mathbb{R}_{\geq0}$ are written as $g \geq 0$ and snapshots as $g(i:j) := \begin{pmatrix}
g(i) & \dots & g(j)
\end{pmatrix}^\transp$. The \emph{(1-0) indicator function} of $\mathcal{S} \subset \mathds{Z}$ is defined as
	\begin{align*}
	\mathds{1}_{\mathcal{S}}(t) := \begin{cases}
	1 & t \in \mathcal{S}\\
	0 & t \notin \mathcal{S}
	\end{cases}
	\end{align*}
	which then defines the \emph{unit impulse function} as $\delta(t)
        := \mathds{1}_{\{0\}}(t)$.
The set of all \emph{absolutely summable functions} is denoted by $\lint$ and the set of \emph{bounded functions} by $\linf$.

\subsection{Linear discrete-time systems}
We consider \emph{linear discrete-time time-invariant systems} %
\begin{align*}
  x(t+1)&=Ax(t)+bu(t),\quad 
y(t)=cx(t)
\end{align*}
with $A\in\mathbb{R}^{n\times n}$, $b,c^\top\in\mathbb{R}^n$.
The output $y(t)=g(t) = cA^{t-1}b$ corresponding to initial state $x(0)=0$ and
input $u = \delta$ is called the \emph{impulse response}.
The transfer function is given by $G(z) = c(zI_n-A)^{-1}b$. It can be
written as
\begin{equation}\label{eq:defGz}
G(z) = \sum_{t=0}^\infty g(t)z^{-t} = \frac{r \prod_{i=1}^{m} (z-z_i)}{\prod_{j=1}^{n}(z-p_i)}\;,
\end{equation}
where $m < n$, $r \in \mathbb{R}$, $p_i$ and $z_i$ are referred to as
\emph{poles} and \emph{zeros}, which are both sorted in the same way
as the eigenvalues of a matrix. The triple $(A,b,c)$ is also called a
\emph{realization of $G$}. We always assume that
$\{z_1,\ldots,z_m\}\cap\{p_1,\ldots,p_n\}=\emptyset$, in which case
the realization is called \emph{minimal}. 
We also assume asymptotic stability, i.e.,
$|p_1|,\ldots,|p_n|<1$. Then, for $u\in \linf$ with $u(t) = u(t)s(t-1)$ and $t
\geq 0$, the Hankel operator associated to the
system is defined by
	\begin{align}
	(\Hank{g} u)(t) &:= \sum_{\tau=-\infty}^{-1} g(t-\tau)u(\tau)
                          = \sum_{\tau=1}^{\infty} g(t+\tau)u(-\tau).
                         \label{eq:def_hank_disc}  
	\end{align}
If we set $x_0=\sum_{\tau=-\infty}^{-1} A^{\tau+1}bu(\tau)$, then
$(\Hank{g}u)(t)$ equals the impulse response to $(A,x_0,c)$. The
operator is the limit (for $j\to\infty$) of the finite truncated matrix
representations $\Hank{g}^j u = H_g(1,j)u(-1:-j)$, 
where \small \begin{align*}%
H_g(t,j) &:= \begin{pmatrix}
g(t) & g(t+1) & \dots & g(t+j-1)\\
g(t+1) & g(t+2) & \dots & g(t+j)\\
\vdots & \vdots & \ddots   & \vdots \\
g(t+j-1)   & g(t+j) & \dots & g(t+2j-2)\\
\end{pmatrix}.
\end{align*}
\normalsize

\subsection{Balanced truncation}\label{sec:bal}
Given a minimal system realization $(A,b,c)$ of $G(z)$, let
\begin{subequations}
	\begin{align}
	\Con{N}(A,b) &:= \begin{pmatrix}
	b & Ab & \dots & A^{N-1}b
	\end{pmatrix} \\
	\Obs{N}(A,c) &:= \begin{pmatrix}
	c^\transp & A^\transp  c^\transp & \dots & {A^\transp}^{N-1} c^\transp
	\end{pmatrix}^\transp
	\end{align}
\end{subequations}
denote the \emph{finite-time controllability} and
\emph{observability} operators.  %
Accordingly, we define the  \emph{(finite-time) controllability,
  observability} and  \emph{cross-Gramian}
by  \begin{subequations}\label{eq:convergence_of_Gramians} 
\begin{align}
P(N) & := \Con{N}(A,b) \Con{N}(A,b)^\transp, \ P = \lim_{N \to \infty} P(N),\\
Q(N) &:=  \Obs{N}(A,c)^\transp \Obs{N}(A,c), \ Q = \lim_{N \to \infty} Q(N),\\
X(N) &:=   \Con{N}(A,b) \Obs{N}(A,c), \ X = \lim_{N \to \infty} X(N),
\end{align}
\end{subequations}
respectively. %
We call $(A,b,c)$ a \emph{finite-time balanced realization} if $P(N)=Q(N)$ is diagonal
with decreasing diagonal entries, called the \emph{finite-time Hankel singular values}. Note that with 
\begin{equation}
    H_g(1,N) := \Obs{N}(A,c)\Con{N}(A,b), \label{eq:hankel_fac}
\end{equation}
it holds that 
\begin{align*}
X(N)^2 &= \Con{N}(A,b) H_g(1,N) \Obs{N}(A,c) \\ &= \Con{N}(A,b) H_g(1,N)^\transp \Obs{N}(A,c) = P(N) {Q}(N).
\end{align*}
Therefore, \begin{equation}
\lambda_i (H_g(1,N)) = \lambda_{i}(X(N)), \ 1 \leq i \leq n \label{eq:EW_cross_finite}
\end{equation}
and if $(A,b,c)$ is finite-time balanced then $X(N)$
is diagonal. An analogous terminology is used in the limit case where we  drop the finite-time prefix and replace $H_g(1,N)$ by $\mathcal{H}_g$.

There always exists a (finite-time) balanced realization $(A,b,c)$ of $G(z)$, and a (finite-time) balanced truncated system approximation $G_r(z)$ of order $r$ is then given by the
 realization $(A_{(1:r),(1:r)},b_{(1:r))},c_{(1:r)})$. %
\subsection{Kung's algorithm}\label{sec:Kung}
Note that $H_g(2,N)=\Obs{N}(A,c)A\Con{N}(A,b)$ and for a minimal
realization, we have $$\rk\, H_g(1,N)=\rk\, \Obs{N}(A,c)=\rk\,{\cal
  C}(N)=\min\{n,N\}.$$ Assume $N\ge n$.  
If $H_g(1,N)=LR$ is a rank-revealing factorization, then the image of $\Obs{N}(A,c)$ equals
the image of $L$, i.e.\ $\Obs{N}(A,c)S=L$ for some nonsingular matrix
$S$, and $S^{-1}\Con{N}(A,b)=R$. We set  $\tilde c=L_{(1,:)}=cS$
and $\tilde{b}=R_{(:,1)}=S^{-1}b$.
The matrices  $\Obs{N}(A,c)$ and $L$ are left-invertible,
while  $\Con{N}(A,b)$ and $R$ are
right-invertible. Therefore,
\begin{align*}
\tilde A&=L^\dagger H_g(2,N) R^\dagger\\ 
&=S^{-1}\Obs{N}(A,c)^\dagger \Obs{N}(A,c) A\Con{N}(A,b)\Con{N}(A,b)^\dagger S \\ &=S^{-1}AS\;,
\end{align*}
i.e., the triple $(\tilde A,\tilde b,\tilde c)$ is similar to
$(A,b,c)$ and 
\begin{align*}\tilde{\mathcal{O}}(N)=\Obs{N}(A,c)S=L,\;
\tilde{\mathcal{C}}(N)=S^{-1}\Con{N}(A,b)=R.
\end{align*} If $L$ and $R$ are chosen from a singular value decomposition $H_g(1,N) =
U(N)\Sigma(N) V(N)^\transp$ as $L=U(N)\Sigma(N)^{\frac{1}{2}}$ and
$R=\Sigma(N)^{\frac{1}{2}}V(N)^\transp $, then $$\tilde Q=LL^\transp
=\Sigma(N)=R^\transp R=\tilde P,$$ i.e.\ the realization is
finite-time balanced. This approach is known as \emph{Kung's
  algorithm}, \cite{kung1987identification}, see also
\cite[p.~74]{markovsky2012low}. Note that $H_g(1,N)$ is symmetric and therefore $U(N)$ and $V(N)$ are
equal up to the column signs.

We denote by $(A_r(N),b_r(N),c_r(N))$ the truncation of $(\tilde A,\tilde b,\tilde c)$ to an $r$-th order
approximation. %
By the convergence of the Gramians \eqref{eq:convergence_of_Gramians}
it follows that  $(A_r(N),b_r(N),c_r(N))$ converges also for $N\to\infty$.
\begin{prop}\label{prop:kung_bt}
	For $G(z)$ and $N > n$, $(A_r(N),b_r(N),c_r(N))$ is a finite-time balanced truncated approximation of $G(z)$ and
        \begin{align*}
(A_r,b_r,c_r) &:= \lim_{N\to \infty}  (A_r(N),b_r(N),c_r(N))
        \end{align*}
is a balanced truncated approximation. 
\end{prop}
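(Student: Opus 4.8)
The plan is to split the statement into its two assertions: that $(A_r(N),b_r(N),c_r(N))$ is a finite-time balanced truncation for each fixed $N$, and that this family converges as $N\to\infty$ to a genuine balanced truncation. The first assertion is essentially immediate from the construction in \cref{sec:Kung}. There it is shown that the Kung realization $(\tilde A,\tilde b,\tilde c)$ obtained from the SVD factors $L=U(N)\Sigma(N)^{\frac12}$ and $R=\Sigma(N)^{\frac12}V(N)^\transp$ is similar to $(A,b,c)$ and satisfies $\tilde P(N)=LL^\transp=\Sigma(N)=R^\transp R=\tilde Q(N)$, a diagonal matrix whose entries decrease because the SVD orders the singular values. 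By the definition of finite-time balancing in \cref{sec:bal}, this makes $(\tilde A,\tilde b,\tilde c)$ a finite-time balanced realization of $G(z)$, so that its leading block truncation $(A_r(N),b_r(N),c_r(N))$ is by definition a finite-time balanced truncated approximation.

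For the second assertion, I would work entirely with the $n\times n$ Gramians rather than with the growing SVD factors. From the identities $\tilde{\mathcal{C}}(N)=S(N)^{-1}\Con{N}(A,b)=R$ and $\tilde{\mathcal{O}}(N)=\Obs{N}(A,c)S(N)=L$ one reads off that the balancing similarity satisfies $S(N)^{-1}P(N)S(N)^{-\transp}=\Sigma(N)=S(N)^\transp Q(N)S(N)$; that is, $S(N)$ is precisely a balancing transformation for the finite-time Gramian pair $(P(N),Q(N))$. Such a transformation is produced by a fixed, explicit recipe — for instance a Cholesky factor of $P(N)$ followed by the eigendecomposition of the resulting symmetric matrix — which is a continuous map on the set of positive-definite pairs for which $P(N)Q(N)$ has simple eigenvalues. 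By \cref{eq:convergence_of_Gramians} we have $P(N)\to P$ and $Q(N)\to Q$, and minimality together with asymptotic stability $|p_i|<1$ guarantees that the limits are positive definite. Hence $S(N)\to S$, the balancing transformation of the infinite-time pair $(P,Q)$, and consequently $(\tilde A(N),\tilde b(N),\tilde c(N))\to(\tilde A,\tilde b,\tilde c)$, a balanced realization in the infinite-time sense.

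It then remains to commute the limit with truncation. Since extracting the leading $r\times r$, $r\times 1$ and $1\times r$ blocks is a continuous (indeed linear) operation, $(A_r(N),b_r(N),c_r(N))\to(A_r,b_r,c_r)$, and the limit is exactly the order-$r$ truncation of the infinite-time balanced realization, i.e.\ a balanced truncated approximation of $G(z)$.

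The main obstacle is the convergence of the balancing similarity when the Hankel singular values are not simple: at a coincidence of singular values the singular vectors, and hence $S(N)$, are determined only up to an orthogonal rotation within the corresponding eigenspace, so the naive map $(P,Q)\mapsto S$ need not be continuous there. I would address this either by fixing a consistent normalization of the eigenbases, so that the chosen branch varies continuously over the relevant range of $N$, or by passing to the transfer function, which is invariant under the residual orthogonal freedom provided no block of equal singular values is split at the truncation index; a density argument reducing to the generic case of distinct singular values is also available.
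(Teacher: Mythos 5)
Your proposal is correct and follows essentially the same route as the paper: finite-time balancedness is read off from the Kung construction ($\tilde Q = LL^\transp = \Sigma(N) = R^\transp R = \tilde P$), and the limit statement is obtained from the convergence of the finite-time Gramians in \eqref{eq:convergence_of_Gramians}. Your explicit continuity argument for the balancing transformation $S(N)$ and your caveat about repeated Hankel singular values merely flesh out details that the paper's own (very terse) justification leaves implicit.
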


\section{$k$-positivity theory}
\label{sec:k_pos_theory}
Let us now introduce the framework of $k$-positivity, which has been studied extensively in the monograph \cite{karlin1968total}. We begin by a discussion of finite dimensional matrices and continue with recent results on Hankel operators, whose approximation is the main subject of this work.   
\subsection{$k$-positive matrices}
A remarkable feature of nonnegative matrices is the Perron-Frobenius theorem \cite{perron1907theorie,frobenius1912matrizen}.
\begin{prop}[Perron-Frobenius]\label{PF}
Let $A \in \Rnn_{\geq 0}$. %
\begin{enumerate}
\item $\lambda_1(A) \geq 0$.
\item If $\lambda_1(A)$ has algebraic
  multiplicity $m_0$, then $A$ has $m_0$ linearly independent
  nonnegative eigenvectors related to $\lambda_1(A)$.
\item If $A$ is irreducible, then $m_0=1$, $\lambda_1(A)>0$ and $A$ has a
  strictly positive eigenvector related to $\lambda_1(A)$.
\end{enumerate}
\end{prop}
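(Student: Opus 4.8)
The plan is to establish the three parts by peeling off structure, treating the \emph{irreducible} case as the technical core and reducing everything else to it. The single most useful fact is that a nonnegative $A \in \Rn$ is irreducible if and only if $(I_n + A)^{n-1}$ is \emph{strictly} positive; I will use this repeatedly to upgrade nonnegative eigenvectors to strictly positive ones.

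For the irreducible core I would use the Collatz--Wielandt characterisation. On the simplex $\Delta := \{x \ge 0 : \sum_i x_i = 1\}$, set $r(x) := \min_{i:\, x_i>0} (Ax)_i/x_i$ and $\rho := \sup_{x\in\Delta} r(x)$. One shows the supremum is attained at some $v\in\Delta$ with $Av=\rho v$, that $\rho\ge 0$ since $r\ge 0$ on $\Delta$, and that multiplying by $(I_n+A)^{n-1}>0$ upgrades $v\ge 0$ to $v>0$. That $\rho$ is the spectral radius follows from the entrywise bound $|\mu|\,|w|\le A|w|$ for any eigenpair $Aw=\mu w$, which gives $|\mu|\le r(|w|/\sum_i|w_i|)\le\rho$; hence $\rho=|\lambda_1(A)|$ and, being real and nonnegative, $\rho=\lambda_1(A)$ under the stated tie-breaking convention. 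For $m_0=1$ I would argue in two steps. Geometric simplicity: two independent real eigenvectors for $\rho$ admit a real combination that is nonnegative with a vanishing entry, contradicting the strict positivity of every nonnegative $\rho$-eigenvector. Algebraic simplicity: writing $p(t):=\det(tI_n-A)$, one has $p'(\rho)=\sum_{i=1}^n \det(\rho I_{n-1}-A^{(i)})$, where $A^{(i)}$ deletes row and column $i$; each such principal submatrix has Perron root strictly below $\rho$ (strict monotonicity under deletion for irreducible matrices), so $\rho I_{n-1}-A^{(i)}$ is a nonsingular M-matrix with positive determinant and $p'(\rho)>0$. This proves part 3 and, as a by-product, the strictly positive case needed below.

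Parts 1 and 2 then follow by perturbation and the Frobenius normal form. For part 1, let $J$ be the all-ones matrix and $A_\varepsilon:=A+\varepsilon J>0$. By the core, $A_\varepsilon$ has a real eigenvalue $|\lambda_1(A_\varepsilon)|>0$ with eigenvector $v_\varepsilon\in\Delta$, $v_\varepsilon>0$. Continuity of the spectrum gives $|\lambda_1(A_\varepsilon)|\to|\lambda_1(A)|$ as $\varepsilon\to 0$, and a convergent subsequence of $(v_\varepsilon)$ yields $v\ge 0$, $v\ne 0$, with $Av=|\lambda_1(A)|v$; thus $|\lambda_1(A)|$ is an eigenvalue, so $\lambda_1(A)\ge 0$. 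For part 2, permute $A$ into block upper-triangular Frobenius normal form with irreducible diagonal blocks, apply part 3 to each block, and single out the diagonal blocks whose Perron root equals $\lambda_1(A)$; their strictly positive eigenvectors lift to linearly independent nonnegative eigenvectors of $A$, and one checks that their number equals the algebraic multiplicity $m_0$.

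The step I expect to be the main obstacle is precisely this last count in part 2: producing exactly $m_0$ \emph{linearly independent} nonnegative eigenvectors and proving this number equals the \emph{algebraic} (not merely geometric) multiplicity of $\lambda_1(A)$. This forces one to track how the dominant irreducible blocks are coupled through the strictly-upper-triangular part of the normal form and to exclude nontrivial Jordan chains at $\lambda_1(A)$ spanning several blocks --- the genuine subtlety that reducibility introduces. By comparison, the irreducible core is the most computation-heavy part but is routine once the Collatz--Wielandt set-up and the determinant identity for $p'(\rho)$ are in place.
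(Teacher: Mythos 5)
The paper itself gives no proof of this proposition: it is quoted as the classical Perron--Frobenius theorem with citations to Perron and Frobenius, so your attempt can only be judged on its own merits. Parts 1 and 3 of your plan are the standard Collatz--Wielandt development and are essentially sound; the points to tighten are (a) attainment of $\sup_{x\in\Delta} r(x)$, since $r$ is not continuous on the simplex (one restricts to the compact set of normalized vectors $(I_n+A)^{n-1}x$, $x\in\Delta$, on which every entry is positive, $r$ is continuous, and the supremum is unchanged because $r\bigl((I_n+A)^{n-1}x\bigr)\ge r(x)$), and (b) the strict inequality $\lambda_1(A)>0$ in the irreducible case, which needs the observation that an irreducible matrix of size $n\ge 2$ has no zero row, so $r(\tfrac{1}{n}\mathds{1})>0$. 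The identity $p'(\rho)=\sum_{i}\det(\rho I_{n-1}-A^{(i)})$ combined with strict monotonicity of the Perron root under deletion of a row and column is a correct route to algebraic simplicity.

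The genuine gap is part 2, and it is more serious than the ``main obstacle'' you flag at the end: the Jordan chains you propose to exclude cannot be excluded, because for genuine eigenvectors the statement you are trying to establish is false as literally written. Take $A=\begin{pmatrix}1 & 1\\ 0 & 1\end{pmatrix}$: it is nonnegative, $\lambda_1(A)=1$ has algebraic multiplicity $m_0=2$, yet the eigenspace of $1$ is spanned by $(1,0)^\transp$ alone, so two linearly independent eigenvectors (of any sign) do not exist. Equivalently, in your lifting step the Perron vector of the second $1\times 1$ basic block of the Frobenius normal form does not lift: solving the coupling equations requires $\lambda_1 I-A_{\beta\beta}$ to be invertible for every block $\beta$ lying above the chosen one in the access hierarchy, and this fails exactly when another basic block lies above it. The proposition holds only if ``eigenvectors related to $\lambda_1(A)$'' is read as \emph{generalized} eigenvectors; in that form it is Rothblum's preferred-basis theorem (the generalized eigenspace of $\rho(A)$, of dimension $m_0$, admits a basis of nonnegative vectors), and its proof runs opposite to your plan: one \emph{constructs} nonnegative Jordan chains by following the access relation among the basic classes rather than ruling them out. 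The number of genuine linearly independent nonnegative eigenvectors is governed by the distinguished basic classes (Victory, Schneider) and can be strictly smaller than $m_0$. Note finally that the paper only ever invokes the existence of one nonnegative eigenvector for $\lambda_1$ (which your part 1 already yields) and part 3, so this subtlety does not affect the paper's downstream results.
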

Obviously, all 1-minors of a nonnegative matrix $A$ are nonnegative. %
A generalization of this property is provided through the concept of 
\emph{multi-positivity}, which is central in our further approach. To introduce it, we need the notion of an $r$-th compound matrix. Consider the set of sorted $r$-tuples of $\{1,\ldots,n\}$ given by 
\begin{equation*}
\mathcal{I}_{n,r} := \{ v = \{v_1,\dots,v_r\} : 1\leq v_1 < v_2 < \dots < v_r \leq n \},
\end{equation*} 
where $\mathcal{I}_{n,r}$ is \emph{ordered lexicographically}. 
The $(i,j)$-th entry of the \emph{r-th multiplicative compound} matrix $\compound{X}{r} \in \mathbb{R}^{\binom{n}{r} \times \binom{m}{r}}$ to $X \in \Rmn$ is then defined by $\det(X_{(I,J)})$, where $I$ is the $i$-th and $J$ is the $j$-th element in $\mathcal{I}_{n,r}$ and $\mathcal{I}_{m,r}$, respectively. For example, if $X \in \mathbb{R}^{3 \times 3}$, then $\compound{X}{r}$ reads
\begin{align*}
\begin{pmatrix}
\det(X_{\{1,2 \},\{1,2 \}}) & \det(X_{\{1,2 \},\{1,3\}}) & \det(X_{\{1,2 \},\{2,3\}})\\
\det(X_{\{1,3 \},\{1,2 \}}) & \det(X_{\{1,3 \},\{1,3\}}) & \det(X_{\{1,3 \},\{2,3\}})\\
\det(X_{\{2,3 \},\{1,2 \}}) & \det(X_{\{2,3 \},\{1,3\}}) & \det(X_{\{2,3 \},\{2,3\}})\\
\end{pmatrix}.
\end{align*}
By the Cauchy-Binet formula \cite{horn2012matrix}, one can show the
following properties (see e.g.\ \cite[Chapter 6]{fiedler2008special}).
\begin{lem}\label{lem:compound_mat}
	Let $X \in \mathbb{R}^{n \times p}$, $Y \in \mathbb{R}^{p \times m}$ and $r \in \mathds{Z}_{\geq 1}$.
	\begin{enumerate}[i)]
		\item $\compound{(XY)}{r} = \compound{X}{r}\compound{Y}{r}$.
		\item If $p = n$, then $\sigma(\compound{X}{r}) = \{\prod_{i
                    \in I} \lambda_i(X): I \in \mathcal{I}_{n,r} \}$.
                  Moreover, if for $i\in I$ the columns $v_i$ of $V_I\in \mathbb{C}^{n\times r}$ are eigenvectors of $X$
                  corresponding to $\lambda_i$, then $C_r(V_I)$
              is an eigenvector of
                  $\compound{X}{r}$   corresponding to %
$\prod_{i
                    \in I} \lambda_i(X)$. 
		\item $\compound{(X^\transp)}{r} = \compound{X}{r}^\transp$.
		\item If $X \succeq 0$, then $\compound{X}{r} \succeq 0$.
	\end{enumerate} 
\end{lem}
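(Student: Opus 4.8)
The plan is to reduce all four claims to the Cauchy–Binet formula, proving i) first and bootstrapping the rest from it. For i), fix $I \in \mathcal{I}_{n,r}$ and $J \in \mathcal{I}_{m,r}$ sitting at positions $i$ and $j$ in the respective lexicographic orderings. The $(i,j)$ entry of $\compound{(XY)}{r}$ is $\det((XY)_{\{I,J\}})$, which Cauchy–Binet expands as $\sum_{K \in \mathcal{I}_{p,r}} \det(X_{\{I,K\}})\,\det(Y_{\{K,J\}})$. Reading $K$ as the summation index, this is precisely the $(i,j)$ entry of the matrix product $\compound{X}{r}\compound{Y}{r}$, so i) holds.

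Claim iii) is then immediate: since $(X^\transp)_{\{I,J\}} = (X_{\{J,I\}})^\transp$ and the determinant is transpose-invariant, the $(i,j)$ entry of $\compound{(X^\transp)}{r}$ equals the $(j,i)$ entry of $\compound{X}{r}$, i.e.\ $\compound{(X^\transp)}{r} = \compound{X}{r}^\transp$. For iv), I would factor $X = M^\transp M$ (for instance $M = X^{1/2}$) and combine i) and iii) to obtain $\compound{X}{r} = \compound{(M^\transp)}{r}\compound{M}{r} = \compound{M}{r}^\transp \compound{M}{r}$, which is symmetric with nonnegative spectrum, hence $\compound{X}{r}\succeq 0$.

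The substance is in ii), where $p = n$. The eigenvector statement follows directly from i): collecting the eigenvectors $v_i$, $i\in I$, as the columns of $V_I$, we have $XV_I = V_I\Lambda_I$ with $\Lambda_I = \diag(\lambda_i : i\in I)$, so
\[
\compound{X}{r}\,\compound{(V_I)}{r} = \compound{(XV_I)}{r} = \compound{(V_I\Lambda_I)}{r} = \compound{(V_I)}{r}\,\compound{(\Lambda_I)}{r} = \Big(\prod_{i\in I}\lambda_i\Big)\compound{(V_I)}{r},
\]
using that $\compound{(\Lambda_I)}{r}$ is the $1\times 1$ compound $\det(\Lambda_I)=\prod_{i\in I}\lambda_i$; the vector $\compound{(V_I)}{r}$ (written $C_r(V_I)$ in the statement) is nonzero precisely when the $v_i$ are independent, and is then the asserted eigenvector. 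For the full spectrum I would take a Schur form $X = UTU^*$ with $U$ unitary and $T$ upper triangular carrying $\lambda_1,\dots,\lambda_n$ on its diagonal. By i), $\compound{X}{r} = \compound{U}{r}\compound{T}{r}(\compound{U}{r})^*$; since complex conjugation commutes with the (polynomial) compound, iii) and i) give $\compound{U}{r}(\compound{U}{r})^* = \compound{(UU^*)}{r} = I$, so $\compound{U}{r}$ is unitary and $\compound{X}{r}$ is similar to $\compound{T}{r}$.

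The crux is the remaining claim that $\compound{T}{r}$ is itself upper triangular in the lexicographic order, with diagonal entries $\det(T_{\{I,I\}}) = \prod_{i\in I}\lambda_i$. The diagonal entries are clear, since $T_{\{I,I\}}$ inherits upper-triangularity. For the vanishing of the sub-(lex-)diagonal entries, suppose $I >_{\mathrm{lex}} J$ and let $a$ be the first index with $i_a > j_a$; then for every $c\ge a$ and $b\le a$ one has $i_c \ge i_a > j_a \ge j_b$, so $(T_{\{I,J\}})_{c,b}=0$. This places a zero block of size $(r-a+1)\times a$ in the lower-left of the $r\times r$ matrix $T_{\{I,J\}}$, and since $(r-a+1)+a = r+1 > r$ its determinant vanishes. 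Reading off the diagonal of the triangular matrix $\compound{T}{r}$ then yields $\sigma(\compound{X}{r}) = \{\prod_{i\in I}\lambda_i(X) : I\in\mathcal{I}_{n,r}\}$. I expect this combinatorial triangularity step to be the only real obstacle; everything else is bookkeeping on top of Cauchy–Binet.
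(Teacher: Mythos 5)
Your proof is correct, and it follows exactly the route the paper indicates: the paper does not prove this lemma itself but cites the Cauchy--Binet formula and standard references (Fiedler, Horn--Johnson), and your argument---Cauchy--Binet for i), transpose/Gram bookkeeping for iii) and iv), and the Schur-form triangularity of $\compound{T}{r}$ in lexicographic order for ii)---is precisely the standard proof found in those sources. The only delicate step, the vanishing of the sub-lex-diagonal minors of an upper triangular $T$ via the $(r-a+1)\times a$ zero block, is handled correctly, so there is nothing to fix.
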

\begin{defn}
	Let $X \in \Rmn$ and $k \leq \min\{m,n\}$. Then, $X$ is called \emph{(strictly) $k$-positive} if all $j$-minors of $X$ are (positive) nonnegative for $1 \leq j \leq k$. If $k = \min\{m,n\}$, we call $X$ \emph{(strictly) totally positive}.
\end{defn}
By \cref{lem:compound_mat,PF}, it holds therefore for strictly
$k$-positive $X \in \Rnn$ that $X$ is a nonnegative matrix with
$\lambda_1(X) > \dots >\lambda_k(X) > 0$. This extends the result on the Perron-Frobenius eigenvalue $\lambda_1(X)$. In particular, we have the following important properties \cite{fallat2017total}. 
\begin{lem}\label{lem:Hankel_EW}
Let $(S)HP_k \subset \Rnn$ denote the set of all (strictly) $\tp{k}$ Hankel matrices. Then,
\begin{enumerate}[i.]
    \item $HP_k$ is a proper convex cone.
    \item $SHP_k$ lies densely in $HP_k$.
    \item If $X_1 \in HP_{k_1}$ and $X_2 \in HP_{k_2}$, then
    \begin{enumerate}
        \item $\lambda_1(X_1)\geq \dots \geq \lambda_{k_1}(X_1) \geq 0$.
        \item $X_1+X_2 \in HP_{\tp{\min\{k_1,k_2\}}}$
    \end{enumerate}
\end{enumerate}
	\end{lem}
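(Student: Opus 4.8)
The plan is to prove the four claims in the order (iii.b), (i), (ii), (iii.a), because additive closure (iii.b) is exactly what makes $HP_k$ convex in (i) and what drives the perturbation in (ii), while (iii.a) is cleanest once density is in hand.

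First I would dispose of the elementary structure in (i). Closure under scaling by $\alpha\ge 0$ is immediate, since a $j$-minor of $\alpha X$ equals $\alpha^{j}$ times the corresponding minor of $X$ and the Hankel pattern is preserved, so $HP_k$ is a cone; it is closed because each condition ``$j$-minor $\ge 0$'' is the preimage of $[0,\infty)$ under a polynomial map, and pointed because $X,-X\in HP_k$ forces every entry (a $1$-minor) to vanish. For solidity within the subspace of Hankel matrices I would exhibit one strictly $k$-positive Hankel matrix: the moment matrix $H=VDV^{\transp}$ with $V=(\theta_\ell^{\,i})$ a Vandermonde built from distinct nodes $\theta_\ell>0$ and $D=\diag(w_\ell)$, $w_\ell>0$; by Cauchy--Binet every $j$-minor of $H$ is a sum of products of strictly positive generalized Vandermonde minors, hence positive. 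Convexity is then equivalent, via the cone property, to closure under addition, i.e.\ to (iii.b) with $k_1=k_2=k$.

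The heart of the lemma, and the main obstacle, is the additive closure (iii.b): determinants are not additive, so this is genuinely special to the translation-invariant Hankel structure. The conceptual tool is the basic composition (Cauchy--Binet) formula of total-positivity theory \cite{karlin1968total}. For a Hankel matrix arising from a moment representation $h_{p+q}=\int t^{p+q}\,d\sigma(t)$ one has, for $r_1<\cdots<r_j$ and $c_1<\cdots<c_j$,
\[
\det\!\left((h_{r_s+c_t})_{s,t=1}^{j}\right)=\int_{t_1<\cdots<t_j}\det\!\left(t_u^{\,r_s}\right)\det\!\left(t_u^{\,c_t}\right)d\sigma(t_1)\cdots d\sigma(t_j)\ \ge\ 0,
\]
because the generalized Vandermonde kernel $t^{x}$ is totally positive. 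Since $X_1$ and $X_2$ correspond to measures $\sigma_1,\sigma_2$, their sum corresponds to $\sigma_1+\sigma_2\ge 0$, so all minors of $X_1+X_2$ stay nonnegative. The subtlety I expect to fight is that a merely $k$-positive finite Hankel matrix need not admit such a representation; here I would appeal to the structure theory in \cite{fallat2017total,karlin1968total} to localize the argument, noting that the sign of each $j$-minor with $j\le\min\{k_1,k_2\}$ uses only total positivity of the kernel up to order $k$. As a sanity check of the mechanism, in the $2\times 2$ case, writing $X_1=(a_{p+q})_{p,q=0}^{1}$ and $X_2=(b_{p+q})_{p,q=0}^{1}$, the quantity $\det(X_1+X_2)$ expands into $\det X_1+\det X_2$ plus the cross term $a_0 b_2+a_2 b_0-2a_1 b_1$, which is nonnegative by $a_1\le\sqrt{a_0 a_2}$, $b_1\le\sqrt{b_0 b_2}$ and AM--GM; note it is the grouped sum of the mixed minors, not the individual ones, that is controlled, which is precisely the feature the composition formula encodes in general. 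This yields $X_1+X_2\in HP_{\min\{k_1,k_2\}}$ and closes convexity in (i).

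Finally, density (ii) and the spectrum (iii.a). For (ii) I fix the strictly $k$-positive $Y$ above and consider $X+\varepsilon Y$ for $X\in HP_k$ and $\varepsilon>0$; the strict form of the composition argument (adding a positive quantity to each minor integral) gives $X+\varepsilon Y\in SHP_k$, and $\varepsilon\downarrow 0$ yields density. For (iii.a), the strict case is already recorded before the statement: by \cref{lem:compound_mat}~(ii) the spectrum of the strictly positive, hence irreducible, compound $\compound{X}{j}$ consists of the products $\prod_{i\in I}\lambda_i(X)$, and \cref{PF}~(3) forces its Perron root $\prod_{i=1}^{j}\lambda_i(X)$ to be real, positive and simple for each $j\le k$; simplicity gives the strict magnitude gaps and the ratios of consecutive products give $\lambda_1(X)>\cdots>\lambda_k(X)>0$. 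The general $X_1\in HP_{k_1}$ then follows by approximating with strictly $k_1$-positive matrices from (ii) and passing to the limit in the magnitude-ordered spectrum, the strict inequalities relaxing to $\lambda_1(X_1)\ge\cdots\ge\lambda_{k_1}(X_1)\ge 0$.
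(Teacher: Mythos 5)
The paper itself gives no proof of this lemma (it is quoted from \cite{fallat2017total}), so your proposal must stand on its own merits; it contains a genuine gap at precisely the point you yourself identify as the heart of the matter, namely the additive closure (iii.b), and since you reduce convexity in (i) and density in (ii) to (iii.b), and (iii.a) to (ii), the gap propagates through the whole proof. Your argument for (iii.b) requires each $X_i$ to admit a moment representation $h_{p+q}=\int t^{p+q}\,d\sigma_i(t)$ with $\sigma_i\ge 0$, so that the composition formula applies and $X_1+X_2$ inherits the representation $\sigma_1+\sigma_2$. But the matrices covered by the lemma essentially never admit such a representation. Every moment matrix, and every limit of moment matrices, is positive semidefinite; yet for $k<n$ a $k$-positive Hankel matrix need not be: $\left(\begin{smallmatrix}1&2\\2&1\end{smallmatrix}\right)$ is a $1$-positive Hankel matrix with determinant $-3$, so it is not even in the closure of the moment cone, and the displayed integral formula is meaningless for it. Even in the totally positive case the literal claim fails: the $3\times 3$ Hankel matrix with $(h_0,\dots,h_4)=(1,1,1,1,2)$ has all minors nonnegative, but a representing measure would satisfy $\int (t-1)^2\,d\sigma=h_0-2h_1+h_2=0$, forcing $\sigma=\delta_1$ and hence $h_4=1$, a contradiction; here one can still pass to the closure of the moment cone, but that is an additional argument you do not make. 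Moreover, your proposed ``localization'' fix misidentifies the obstruction: the Vandermonde kernel $t^{x}$ is totally positive of \emph{all} orders, so restricting attention to minors of order $\le k$ buys nothing; what is missing is the representing measure itself, and no statement about the order of positivity of the kernel produces one.

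What closes the gap---and what the reference cited by the paper actually supplies---is a characterization of Hankel $k$-positivity by conditions that are manifestly stable under addition, namely positive semidefiniteness of contiguous Hankel blocks. For the totally positive case this is the theorem that a finite Hankel matrix has all minors nonnegative if and only if it and its one-step shift are positive semidefinite \cite{fallat2017total,pinkus2009totally}; sum-closure is then immediate, since positive semidefiniteness is additive and shifting commutes with sums. The order-$k$ statement needs the Fekete-type refinement expressing $k$-positivity through contiguous blocks only (compare \cref{prop:Hankel_minor} for the operator version), after which (iii.b), hence (i), follows, and (ii) follows by adding $\varepsilon$ times a strictly totally positive Hankel matrix and invoking the strict form of the same characterization. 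The parts of your proposal that do not lean on (iii.b) are sound: the cone, closedness and pointedness arguments in (i); the Vandermonde--Cauchy--Binet construction of a strictly totally positive Hankel matrix, which also settles solidity relative to the Hankel subspace; and the compound-matrix/Perron--Frobenius argument giving $\lambda_1(X)>\dots>\lambda_k(X)>0$ in the strict case, which agrees with the remark the paper makes just before the lemma.
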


\subsection{Hankel $k$-positive systems}
Next, we review LTI systems with $G(z)$ given by \eqref{eq:defGz}, whose Hankel operator representation matrix \eqref{eq:def_hank_disc} is $k$-positive. These systems are the main interest of this paper. The results stated here can be found in \cite{grussler2020variation}.
\begin{defn}[Hankel $k$-positivity] \label{def:hankel_kpos}
$G(z)$ is called \emph{Hankel (strictly) $\tp{k}$} if $H_g(1,N)$ is (strictly) $\tp{k}$ for all $N \geq k$. We say that $G(z)$ is \emph{Hankel (strictly) totally positive} if $k = \infty$. 
\end{defn}
In case of $k=1$, this means that $g \geq 0$. As such system map nonnegative inputs to nonnegative outputs, they are also called \emph{externally positive}.
An important sub-class of externally positive systems is formed through so-called internal positivity.
\begin{defn}[Internal positivity]
	$G(z)$ has an internally positive realization $(A,b,c)$ if $A$, $b$ and $c$ are nonnegative. 
\end{defn}
There exists several sufficient certificate for external positivity \cite{grussler2019tractable,drummond2019external}. Fortunately, also in case of $k > 1$, we do not need to check all minors of $H_g(1,N)$, but it suffices to verify external positivity of the so-called \emph{$j$-th compound system $G_{[j]}(z)$} with $g_{[j]}(t) := \det(H_g(t,j))$, $1 \leq j \leq k$.

\begin{prop}\label{prop:Hankel_minor}
	For $G(z)$ and $k \leq n$, the following are equivalent: 
	\begin{enumerate}
		\item $G(z)$ is Hankel $k$-positive.
		\item $G_{[j]} \geq 0$ is externally positive, $1 \leq j \leq k$.
		\item $H_g(1,k-1) \succ 0$, $H_g(2,k-1) \succeq 0$ and $G_{[k]}$ is externally positive. 
	\end{enumerate}
\end{prop}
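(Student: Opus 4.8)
The plan is to prove the cyclic chain of implications by exploiting two classical pillars of total-positivity theory: Fekete's criterion, which reduces sign-regularity to consecutive minors, and the Stieltjes-type characterization of totally nonnegative Hankel matrices via positive semidefiniteness of the matrix together with its one-step shift. The equivalence $(1)\Leftrightarrow(2)$ will rest on the first pillar, while $(1)\Leftrightarrow(3)$ will rest on the second. Both are available from \cite{karlin1968total,fallat2017total,grussler2020variation}.

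For $(1)\Rightarrow(2)$, I would first observe that by the Hankel structure every consecutive $j$-minor of $H_g(1,N)$ equals $\det H_g(t,j)=g_{[j]}(t)$ for some $t$, since the submatrix on consecutive rows $\{a,\dots,a+j-1\}$ and consecutive columns $\{b,\dots,b+j-1\}$ depends only on $a+b$. Hence $\tp{k}$-ness of all $H_g(1,N)$ forces $g_{[j]}(t)\ge 0$ for every $t\ge 1$ and $1\le j\le k$, i.e.\ each $G_{[j]}$ is externally positive. For the converse $(2)\Rightarrow(1)$, I would invoke Fekete's criterion: nonnegativity (positivity) of all consecutive minors of orders $1$ through $k$ forces the matrix to be $\tp{k}$ (strictly $\tp{k}$). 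The strict case is immediate; the non-strict case follows by approximating $G$ by systems with strictly positive compound responses and passing to the limit, using the density of $SHP_k$ in $HP_k$ from \cref{lem:Hankel_EW}.

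For $(1)\Rightarrow(3)$, the two semidefiniteness conditions are cheap consequences of $\tp{k}$-ness. The block $H_g(1,k-1)$ is a symmetric, totally nonnegative $(k-1)\times(k-1)$ matrix, so it lies in $HP_{k-1}$ and by \cref{lem:Hankel_EW} its eigenvalues satisfy $\lambda_1\ge\dots\ge\lambda_{k-1}\ge 0$; minimality of the realization gives $\rk H_g(1,k-1)=\min\{n,k-1\}=k-1$, whence $\lambda_{k-1}>0$ and $H_g(1,k-1)\succ 0$. The shifted block $H_g(2,k-1)$ is likewise symmetric, and its minors are again minors of some $H_g(1,N)$ of order at most $k-1$, so it too lies in $HP_{k-1}$ and the same eigenvalue bound yields $H_g(2,k-1)\succeq 0$; finally $G_{[k]}$ is externally positive by $(2)$.

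The substantive direction is $(3)\Rightarrow(1)$, and this is where I expect the main obstacle. The idea is to read the pair $H_g(1,k-1)\succ 0$, $H_g(2,k-1)\succeq 0$ as the finite, order-$(k-1)$ instance of the Stieltjes moment condition: a Hankel matrix together with its shift being positive semidefinite certifies a truncated Stieltjes moment sequence, and such Hankel matrices are totally nonnegative up to the corresponding order \cite{karlin1968total,fallat2017total}. This would upgrade the two semidefiniteness hypotheses to full $(k-1)$-positivity, i.e.\ $g_{[j]}(t)\ge 0$ for all $t$ and $j\le k-1$; the remaining order $j=k$ is supplied directly by external positivity of $G_{[k]}$, and $(2)$, hence $(1)$, follows. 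The delicate points are to make the truncated moment argument rigorous at the finite order $k-1$ rather than for the full infinite sequence, and to confirm that the strict condition $H_g(1,k-1)\succ 0$ supplies exactly the nondegeneracy (full Hankel rank, equivalently system order at least $k-1$) needed for the characterization to close. An alternative, purely determinantal route would propagate signs across orders through the Desnanot--Jacobi identity $\det H_g(t,j)\,\det H_g(t+2,j-2)=\det H_g(t,j-1)\,\det H_g(t+2,j-1)-\det H_g(t+1,j-1)^2$, seeded by the positive base cases at $t=1$ furnished by $H_g(1,k-1)\succ 0$; the obstacle there is controlling the induction when intermediate minors vanish.
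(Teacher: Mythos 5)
A preliminary remark: this paper never proves \cref{prop:Hankel_minor} at all; it imports the statement from \cite{grussler2020variation} ("The results stated here can be found in ..."), so your proposal can only be judged on its own merits, not against an in-paper argument. On those merits, your easy directions $(1)\Rightarrow(2)$ and the semidefiniteness part of $(1)\Rightarrow(3)$ are fine, but both substantive implications have genuine gaps. For $(2)\Rightarrow(1)$: Fekete's criterion is a theorem about \emph{strictly} positive solid minors, and its non-strict analogue is false for general matrices --- e.g.\ $M=\left(\begin{smallmatrix}0&0&1\\0&0&0\\1&0&0\end{smallmatrix}\right)$ has all solid minors nonnegative, yet its minor on rows $\{1,3\}$ and columns $\{1,3\}$ equals $-1$. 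Since Hankel $k$-positivity is defined through \emph{nonnegative} minors, the non-strict case is precisely the hard part, and your patch is circular: \cref{lem:Hankel_EW} says $SHP_k$ is dense in $HP_k$, i.e.\ it approximates matrices \emph{already known} to be $k$-positive, whereas membership in $HP_k$ is exactly the conclusion you are trying to reach. What you would actually need is that Hankel matrices (of the finite rank forced by the order-$n$ system) with nonnegative \emph{solid} minors up to order $k$ can be approximated by ones with strictly positive solid minors; that statement is essentially the proposition itself, not an available lemma.

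For $(3)\Rightarrow(1)$, your primary route cannot work as described: the hypotheses $H_g(1,k-1)\succ 0$ and $H_g(2,k-1)\succeq 0$ constrain only the finitely many samples $g(1),\dots,g(2k-2)$, so no truncated Stieltjes-moment theorem can ``upgrade'' them to $(k-1)$-positivity of the whole operator, because $g_{[j]}(t)$ for large $t$ involves samples about which those two blocks say nothing (already for $k=2$, the conditions $g(1)>0$, $g(2)\ge 0$ are silent on the sign of $g(5)$). The entire content of this direction is propagation along $t$, which must be driven by $g_{[k]}(t)\ge 0$ for all $t$; the Desnanot--Jacobi identity that you relegate to an ``alternative route'' is in fact the main tool, and the vanishing-minor case you flag as an obstacle is exactly where the work lies (it is resolved by Hankel rank-propagation arguments: a vanishing solid minor forces a block of further minors to vanish, which is then played against minimality and the order assumption $k\le n$). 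The same omission undercuts the strictness claim inside your $(1)\Rightarrow(3)$: $\rk H_g(1,k-1)=k-1$ does \emph{not} follow from minimality alone, since the product $\Obs{k-1}(A,c)\,\Con{k-1}(A,b)$ of two rank-$(k-1)$ factors with inner dimension $n$ can drop rank (e.g.\ $g=(0,1,0,0,\dots)$ is minimal of order $2$ but $H_g(1,1)=0$); here, too, positive definiteness of the corner block must be extracted from $k$-positivity via the same rank-propagation argument, not from minimality.
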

	
Note that $G_{[j]}$ are of finite order as $G_{[j]}$ has the realization $(\compound{A}{j}, \compound{\Con{j}(A,b)}{j},\compound{\Obs{j}(A,c)}{j})$.

\begin{example}\label{ex:sum_first}
	The simplest example of a Hankel totally positive system is $G(z) = \sum_{i=1}^n \frac{r_i}{z-p_i}$ with $r_i, p_i \geq 0$. Indeed, for each system $(p_i,r_i,1)$, it holds for $j \geq 2$ that $\rk(\mathcal{O}(j)) = 1$, which is why $C_j(\mathcal{O}(j)) = 0$ and thus $g_{[j]} = 0$. First order externally positive systems are therefore Hankel totally positive and by \cref{lem:Hankel_EW} also their sums. 
\end{example} 
First order systems are indeed the prototypes of $k$-positivity.
\begin{prop}\label{prop:tpk_hank}
	Let $G(z) = \sum_{i=1}^n \frac{r_i}{z-p_i}$ have distinct poles and be Hankel $k$-positive with $n \geq k \geq2$. Then,
	\begin{equation}
	G(z) = \frac{r_1}{z-p_1} + G_r(z) \quad \text{where} \quad \Hank{g_r} \text{ is $k-1$-positive}
	\end{equation}
	with $r_1 > 0$ and $p_1 \geq 0$.
\end{prop}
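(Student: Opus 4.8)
The plan is to reduce the statement, via \cref{prop:Hankel_minor}, to showing that each consecutive compound response $g_{[j]}^{r}(t) := \det H_{g_r}(t,j)$ of the remainder $G_r(z) = \sum_{i=2}^n \frac{r_i}{z-p_i}$ is nonnegative for $1\le j\le k-1$ and all $t\ge 1$; by that proposition this is exactly Hankel $(k-1)$-positivity of $G_r$. I would first record the diagonal Vandermonde factorization $H_g(t,j) = V_j^\transp D_t V_j$ with $(V_j)_{i\nu}=p_i^{\nu-1}$ and $D_t=\diag(r_i p_i^{t-1})$, which by the Cauchy--Binet formula (cf.\ \cref{lem:compound_mat}) writes $g_{[j]}(t)=\det H_g(t,j)$ as $\sum_{|S|=j}c_S\,\pi_S^{t-1}$, where $\pi_S=\prod_{i\in S}p_i$ and $c_S=\big(\prod_{a<b\in S}(p_b-p_a)\big)^2\prod_{i\in S}r_i$; the same expression restricted to $S\subseteq\{2,\dots,n\}$ gives $g_{[j]}^{r}$. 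This realizes every compound system as a first-order-lag sum in the sense of \cref{ex:sum_first}, with explicit poles $\pi_S$ and residues $c_S$, and provides the bookkeeping used throughout.

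For $p_1\ge 0$ and $r_1>0$ I would argue through the dominant pole. Hankel $k$-positivity with $k\ge 1$ gives external positivity of $G$, and with $k\ge 2$ that of $G_{[2]}$; equivalently $g\ge 0$ together with $g(t)g(t+2)-g(t+1)^2\ge 0$, i.e.\ $g$ is log-convex. Log-convexity makes the ratios $g(t+1)/g(t)$ nondecreasing, hence convergent, which rules out a complex or a co-dominant ($-p_1$) pole at maximal modulus and pins the limit to the unique strictly dominant pole; that limit equals $p_1$, so $p_1\ge 0$ is real and, reading off the leading coefficient, $r_1>0$. Applying the same reasoning to the compounds $G_{[j]}$ (whose leading pole is $p_1\cdots p_j$ with positive residue $c_{(1:j)}$) propagates this to the $k$ dominant poles, $p_1>\dots>p_k\ge 0$ with $r_1,\dots,r_k>0$, invoking \cref{lem:Hankel_EW}(iii) and \cref{PF} only through nonnegativity of the leading eigenvalue and residue.

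The core is the reduction of the positivity order. Writing $v_j := (1,p_1,\dots,p_1^{j-1})^\transp$, the dominant lag contributes a rank-one term $H_g(t,j)=H_{g_r}(t,j)+r_1 p_1^{t-1}v_jv_j^\transp$, which is a nonnegative multiple of a positive-semidefinite matrix since $p_1\ge 0$ and $r_1>0$. Embedding this in the affine family $g_{[j]}^{(x)}(t):=\det\big(H_{g_r}(t,j)+x\,p_1^{t-1}v_jv_j^\transp\big)$, $x\in[0,r_1]$, one has $g_{[j]}^{(0)}=g_{[j]}^{r}$ (to be proved) and $g_{[j]}^{(r_1)}=g_{[j]}\ge 0$ (known), with nonnegative slope in $x$. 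For $j=1$ the argument is clean and serves as the template: normalizing $h(t):=g(t)/p_1^{t-1}=r_1+\sum_{i\ge 2}r_i(p_i/p_1)^{t-1}$, log-convexity makes $h$ nonincreasing with limit $r_1$, whence $h(t)\ge r_1$, i.e.\ $g_r(t)=g(t)-r_1p_1^{t-1}\ge 0$. Thus external positivity of $G_r$ comes not from the crude (and generally false) bound $g\ge r_1p_1^{t-1}$ but precisely from the second-order minors.

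The main obstacle is upgrading this template from $j=1$ to $1\le j\le k-1$, because removing the entire $p_1$-mode from the $j$-th compound strips off \emph{all} subsets $S\ni 1$ at once, not merely the single leading compound pole $\pi_{(1:j)}$. I would handle this by the same mechanism one level up: a Sylvester/Desnanot--Jacobi identity expresses the $2\times 2$ minors of the compound Hankel matrix through $(j+1)\times(j+1)$ minors of $H_g$, so Hankel $(j+1)$-positivity of $G$ (available since $j+1\le k$) yields log-convexity of the compound sequence $g_{[j]}$, hence a strictly dominant real compound pole with monotone ratios. Combining this with the cone and closedness properties of $HP_{k-1}$ in \cref{lem:Hankel_EW}, and after a density reduction to strictly $k$-positive $G$ via \cref{lem:Hankel_EW}(ii), I would run an induction on $j$ to conclude $g_{[j]}^{r}\ge 0$. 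Checking that the compound minor identity matches the subset bookkeeping of the $c_S$ exactly — so that the induction peels off precisely the index-$1$ subsets — is the delicate point where the proof must be carried out in full.
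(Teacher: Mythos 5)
Since the paper states \cref{prop:tpk_hank} without proof (it is imported from \cite{grussler2020variation}), your proposal must be judged on its own. Much of it is correct: the reduction to consecutive minors via \cref{prop:Hankel_minor}, the Cauchy--Binet/Vandermonde bookkeeping $g_{[j]}(t)=\sum_{|S|=j}c_S\pi_S^{t-1}$, the dominance step giving $p_1\geq 0$ and $r_1>0$, the Desnanot--Jacobi identity (which indeed gives $g_{[j]}(t)g_{[j]}(t+2)-g_{[j]}(t+1)^2=g_{[j+1]}(t)\,g_{[j-1]}(t+2)\geq 0$, hence log-convexity of each compound response), and the $j=1$ template; in particular the proposition is fully proved for $k=2$. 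But for $k\geq 3$ the gap you flag yourself is fatal to the argument as proposed, not merely ``delicate.'' For $2\leq j\leq k-1$ you must show $g_{[j]}^{r}(t)=g_{[j]}(t)-\sum_{S\ni 1}c_S\pi_S^{t-1}\geq 0$, i.e.\ subtract \emph{all} index-$1$ modes, whereas the monotone-ratio mechanism only bounds $g_{[j]}$ below by its \emph{single} dominant mode $c_{(1:j)}\pi_{(1:j)}^{t-1}$. Already for $j=2$ the inequality you need reads $g_{[2]}(t)\geq r_1p_1^{t-1}\sum_{i\geq 2}r_i(p_i-p_1)^2p_i^{t-1}$, whose right-hand side has a term for every $i\geq 2$, with residues $r_i$ that may be negative or complex; this inequality is literally the statement $g_{[2]}^{r}\geq 0$ being proven, so the sketched induction is circular at its core, and no cone/density bookkeeping from \cref{lem:Hankel_EW} can substitute for it.

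The missing idea is a bordering-plus-limit argument that removes all index-$1$ subsets in one stroke. Fix $t$ and $j\leq k-1$, and consider the $(j{+}1)$-minor of $H_g(1,N)$ with rows $\{t,\dots,t+j-1,A\}$ and columns $\{1,\dots,j,B\}$ for large $A,B,N$; it is nonnegative by Hankel $(j{+}1)$-positivity since $j+1\leq k$. Divide the last row by $p_1^{A-1}$ and the last column by $p_1^{B-1}$ (note $p_1>0$: distinct poles and $n\geq 2$ exclude $p_1=0$), and let $A,B\to\infty$ using the strict dominance $|p_i|<p_1$, $i\geq 2$, which your own dominance step establishes. This yields
\begin{equation*}
0\;\leq\;\det\begin{pmatrix} H_g(t,j) & r_1p_1^{t-1}v_j\\ r_1v_j^\transp & r_1\end{pmatrix}
\;=\;r_1\det\bigl(H_g(t,j)-r_1p_1^{t-1}v_jv_j^\transp\bigr)
\;=\;r_1\,\det H_{g_r}(t,j),
\end{equation*}
where the middle equality is the Schur complement with respect to the corner entry $r_1>0$, and the last uses your rank-one identity $H_{g_r}(t,j)=H_g(t,j)-r_1p_1^{t-1}v_jv_j^\transp$. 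Hence $g_{[j]}^{r}(t)\geq 0$ for all $t\geq 1$ and $1\leq j\leq k-1$, with no induction on $j$ and no appeal to compound log-convexity, and \cref{prop:Hankel_minor} then gives that $\Hank{g_r}$ is $(k-1)$-positive. In short: your preliminaries and the $k=2$ case stand, but the core of the proposition requires this bordering argument (or an equivalent), which your proposal does not supply.
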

{{In particular, a repeated application of \cref{prop:tpk_hank} implies that $G(z) = \sum_{i=1}^{k} \frac{r_i}{z-p_i} + G_r(z)$ with $r_i > 0$, $p_i \geq 0$ and $G_r(z)$ only containing poles of smaller magnitude. The dominant dynamics of $G(z)$ are, therefore, Hankel totally positive. For $k = n$, we have the following necessary and sufficient decomposition known from relaxation systems.}}
\begin{cor}\label{cor:totally_hank}
	$G(z)$ is Hankel totally positive if and only if $G(z) = \sum_{i=1}^{n} \frac{r_i}{z-p_i}$, where $r_i > 0$ and $p_i\geq0$. 
\end{cor}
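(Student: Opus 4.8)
The plan is to treat the two implications separately, handling the ``if'' direction through the additive structure of the positivity cone and the harder ``only if'' direction by peeling off one pole at a time. For sufficiency, suppose $G(z)=\sum_{i=1}^n \frac{r_i}{z-p_i}$ with $r_i>0$ and $p_i\geq 0$. By \cref{ex:sum_first} each summand $\frac{r_i}{z-p_i}$ is a first-order externally positive system and hence Hankel totally positive, i.e.\ its Hankel matrices lie in $HP_\infty$. Since $HP_\infty$ is closed under addition by \cref{lem:Hankel_EW} (item iii.(b) with $k_1=k_2=\infty$), the finite sum $G$ is again Hankel totally positive. This direction needs no assumption on the $p_i$ beyond nonnegativity and no distinctness.

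For necessity, assume $G$ is Hankel totally positive of McMillan degree $n$, so that every $H_g(1,N)$ has all minors nonnegative, i.e.\ $H_g(1,N)\in HP_N$. First I would record that the poles must be real, nonnegative and simple. Each $H_g(1,N)$ is a symmetric Hankel matrix, so by \cref{lem:Hankel_EW} item iii.(a) (applied with $k_1=N$) all its eigenvalues are nonnegative; hence $H_g(1,N)\succeq 0$, and the same argument applied to the shifted matrices gives $H_g(2,N)\succeq 0$, for all $N$. These are precisely the positive-semidefiniteness conditions of the Stieltjes moment problem (and, for $k=n$, the strict normal form $H_g(1,n-1)\succ 0$, $H_g(2,n-1)\succeq 0$ of \cref{prop:Hankel_minor} item 3), so $g(t)=\int_0^\infty x^{t-1}\,d\mu(x)$ for some nonnegative measure $\mu$ on $[0,\infty)$.

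Because $\Hank{g}$ has rank exactly $n$, the measure $\mu$ must be atomic with exactly $n$ atoms, $\mu=\sum_{i=1}^n r_i\,\delta_{p_i}$, which forces the $p_i\in[0,\infty)$ to be distinct and the weights $r_i$ to be positive. Reading off $g(t)=\sum_{i=1}^n r_i p_i^{t-1}$ then yields $G(z)=\sum_{i=1}^n \frac{r_i}{z-p_i}$, as claimed. Alternatively, once the poles are known to be real, nonnegative and distinct, I could bypass the moment machinery and instead iterate \cref{prop:tpk_hank} as in the preceding remark: each application splits off a term $\frac{r_1}{z-p_1}$ with $r_1>0$, $p_1\geq 0$ and leaves a Hankel $(k-1)$-positive remainder of one lower degree whose poles remain real, nonnegative and distinct, so the induction proceeds and after $n$ steps the remainder has degree $0$ and vanishes.

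I expect the genuine obstacle to be exactly the preliminary claim that total positivity excludes complex and repeated poles, since this is what lets either finishing argument run. Excluding complex poles rests on the Hamburger conclusion $H_g(1,N)\succeq 0$ for all $N$, while excluding repeated poles—which would introduce forbidden impulse-response terms such as $t\,p^{t-1}$—requires the Stieltjes normalization $H_g(2,N)\succeq 0$ together with the rank count that pins $\mu$ down to $n$ atoms. Once this reality-and-simplicity step is secured, both the moment argument and the peeling argument close the proof routinely.
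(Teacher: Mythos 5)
Your proof is correct, but it takes a genuinely different route from the paper's. The ``if'' direction coincides with the paper's argument (it is exactly \cref{ex:sum_first} combined with the additivity property in \cref{lem:Hankel_EW}). For the ``only if'' direction, however, you solve a classical Stieltjes moment problem: from $H_g(1,N)\succeq 0$ and $H_g(2,N)\succeq 0$ for all $N$ you extract a representing measure $\mu$ on $[0,\infty)$, and the finite rank of $\Hank{g}$ forces $\mu$ to be $n$-atomic, which yields the decomposition. The paper instead obtains the corollary as a by-product of \cref{cor:total_pos}, by realization theory: it uses only the two \emph{finite} matrices $H_g(1,n)\succ 0$ and $H_g(2,n)\succeq 0$, factors $H_g(1,n)=LL^\transp$ (Kung's algorithm) to produce a balanced state-space symmetric minimal realization with $A=L^\dagger H_g(2,n)(L^\dagger)^\transp\succeq 0$, and then diagonalizes $A$ as in \cref{prop:symm} to read off $p_i\geq 0$ (eigenvalues of $A\succeq 0$) and $r_i>0$ (squares of the entries of the transformed input vector, nonzero by minimality). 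Your route is essentially the classical one, which the paper explicitly cites as \cite[Theorem~4.4]{pinkus2009totally} and deliberately avoids; it handles complex and repeated poles uniformly, at the price of invoking moment-problem machinery and the entire infinite family of positivity conditions. The paper's route is finite-dimensional and constructive---it exhibits an internally positive, symmetric, balanced realization---and is precisely the computation reused in the proof of \cref{thm:main}, which is why the paper prefers it. Your secondary ``peeling'' argument via \cref{prop:tpk_hank} matches the paper's remark following that proposition, but, as you yourself note, it is not self-contained since \cref{prop:tpk_hank} presupposes distinct poles. One small misattribution in your closing paragraph: repeated (and complex) poles are already excluded by the Hamburger conditions $H_g(1,N)\succeq 0$ for all $N$ together with the rank count; the shifted condition $H_g(2,N)\succeq 0$ is what forces the atoms, i.e.\ the poles, to lie in $[0,\infty)$ rather than merely in $\mathbb{R}$.
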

In other words, Hankel $k$-positivity is a framework that quantifies the transit from external positivity -- one dominant first order lag -- to relaxation systems -- sums of first order lags.

\section{Reduction of $k$-positive Hankel operators}\label{sec:Main}
Next, we look into balanced truncation of $k$-positive Hankel
operators. We start with state-space symmetric
systems as an intermediate step. Then, we treat the totally positive
case, before we finally prove Theorem \ref{thm:main} as our  general main result. 
\subsection{Balanced truncation of state-space symmetric systems}
\begin{defn}
	A realization $(A,b,c)$ is called \emph{state-space symmetric} if $A = A^\transp$ and $b^\transp = c$.
\end{defn}
The following characterizations of state-space symmetric systems holds. 
\begin{prop}\label{prop:symm}
Let $G(z)$ be of order $n$. Then the following are equivalent:
\begin{enumerate}
	\item $G(z)$ has a state-space symmetric minimal realization. \label{item:sym_real}
	\item $H_g(1,n) \succ 0$.  \label{item:psd}  
	\item $G(z) = \sum_{i=1}^n \frac{r_{i}}{z-p_{i}}$ with $r_{i} > 0$ and $p_i \in \mathbb{R}$ for all $i$. \label{item:parallel}
	\item If $(A,b,c)$ is a minimal realization of $G(z)$ with cross-Gramian $X$, then $\sigma(X) \subset \mathds{R}_{> 0}$. \label{item:cross}
	\item $G(z)$ has a balanced state-space symmetric minimal realization. \label{item:sym_real_bal}
\end{enumerate}  
\end{prop}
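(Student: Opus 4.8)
The plan is to prove the equivalences through the cycle $(1)\Rightarrow(2)\Rightarrow(3)\Rightarrow(5)\Rightarrow(1)$ and then to attach $(4)$ via $(3)\Rightarrow(4)\Rightarrow(2)$. Several of these links are short. The implication $(5)\Rightarrow(1)$ is immediate, since a balanced state-space symmetric realization is in particular state-space symmetric. For $(1)\Rightarrow(2)$, I would observe that $A=A^\transp$ and $c=b^\transp$ turn the rows $cA^{j}$ of $\Obs{n}(A,c)$ into $(A^{j}b)^\transp$, so that $\Obs{n}(A,c)=\Con{n}(A,b)^\transp$; then \eqref{eq:hankel_fac} gives $H_g(1,n)=\Con{n}(A,b)^\transp\Con{n}(A,b)$, which is positive definite because minimality makes $\Con{n}(A,b)$ invertible. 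For $(3)\Rightarrow(5)$, I would start from the diagonal realization $A=\diag(p_1,\dots,p_n)$ and $b=c^\transp=(\sqrt{r_1},\dots,\sqrt{r_n})^\transp$, which reproduces $g(t)=\sum_i r_ip_i^{t-1}$ and is state-space symmetric; since $A=A^\transp$ and $c=b^\transp$ force $P=Q$, a single orthogonal change of coordinates (the eigenvector matrix of $P$) diagonalizes $P=Q$ while preserving symmetry, yielding a balanced symmetric realization after ordering the diagonal decreasingly.

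The crux is $(2)\Rightarrow(3)$. Using $H_g(1,n)=\Obs{n}(A,c)\Con{n}(A,b)$ and $H_g(2,n)=\Obs{n}(A,c)A\Con{n}(A,b)$ together with the invertibility of $\Obs{n}(A,c)$ and $\Con{n}(A,b)$ for a minimal realization at $N=n$, the product $H_g(2,n)H_g(1,n)^{-1}$ is similar to $A$, so its eigenvalues are exactly the poles. Because $H_g(1,n)\succ 0$, I would conjugate by $H_g(1,n)^{1/2}$ to obtain $H_g(1,n)^{-1/2}H_g(2,n)H_g(1,n)^{-1/2}$, which has the same spectrum and is symmetric (as $H_g(2,n)$ is a Hankel, hence symmetric, matrix). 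Symmetry forces the poles to be real, and for a minimal SISO realization a real, diagonalizable $A$ cannot have a repeated eigenvalue (a single $b$ cannot excite a two-dimensional eigenspace), so the poles are real and distinct. Writing $g(t)=\sum_i r_ip_i^{t-1}$ then yields the congruence $H_g(1,n)=V^\transp\diag(r_1,\dots,r_n)V$ with $V$ the invertible Vandermonde matrix of the poles, and Sylvester's law of inertia converts $H_g(1,n)\succ 0$ into $r_i>0$, which is~(3).

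To attach $(4)$, I would use that $\sigma(X)$ coincides with the nonzero spectrum of $\mathcal{H}_g$: taking $N\to\infty$ in $\lambda_i(H_g(1,N))=\lambda_i(X(N))$ and invoking the convergence $X(N)\to X$ and $H_g(1,N)\to\mathcal{H}_g$ from \eqref{eq:convergence_of_Gramians} gives $\sigma(X)=\{\lambda_i(\mathcal{H}_g):1\le i\le n\}$. For $(3)\Rightarrow(4)$, the form $\mathcal{H}_g=\sum_i r_iw_iw_i^\transp$ with $w_i=(1,p_i,p_i^2,\dots)^\transp$ and $r_i>0$ is positive semidefinite of rank $n$, so its nonzero eigenvalues, and hence $\sigma(X)$, are positive. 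For $(4)\Rightarrow(2)$, $\sigma(X)\subset\mathbb{R}_{>0}$ makes $\mathcal{H}_g\succeq 0$, and $H_g(1,n)$ is its leading $n\times n$ principal submatrix and therefore positive semidefinite, hence positive definite by minimality.

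I expect the main obstacle to be the reality and distinctness of the poles inside $(2)\Rightarrow(3)$: the symmetrization of the pencil $(H_g(2,n),H_g(1,n))$ is what produces real eigenvalues, and the reduction from ``real and semisimple'' to ``real and distinct'' is exactly where the scalar (SISO) nature of the realization enters. Once the distinct real poles are in hand, positivity of the residues is a routine inertia computation, and the bookkeeping for $(4)$ is straightforward given the stated convergence of the Gramians.
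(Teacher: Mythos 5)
Your proposal is correct, but it reaches the equivalences by a genuinely different route than the paper. The paper chains \ref{item:cross}) $\Rightarrow$ \ref{item:psd}) $\Rightarrow$ \ref{item:sym_real_bal}) $\Rightarrow$ \ref{item:parallel}) $\Rightarrow$ \ref{item:sym_real}) $\Rightarrow$ \ref{item:cross}), and its pivotal step is \ref{item:psd}) $\Rightarrow$ \ref{item:sym_real_bal}): given $H_g(1,n)\succ 0$, take a symmetric SVD $H_g(1,n)=U\Sigma U^\transp$ and observe that Kung's algorithm (already set up in the preliminaries, and reused later in the proof of the main theorem) then delivers a balanced \emph{symmetric} realization in one line; the partial-fraction form \ref{item:parallel}) follows afterwards from the spectral decomposition of the symmetric $A$. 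You instead prove \ref{item:psd}) $\Rightarrow$ \ref{item:parallel}) directly, by a classical matrix-pencil argument: $H_g(2,n)H_g(1,n)^{-1}=\Obs{n}(A,c)\,A\,\Obs{n}(A,c)^{-1}$ is similar to $A$, congruence by $H_g(1,n)^{-1/2}$ symmetrizes it (so the poles are real and $A$ is diagonalizable), the PBH test rules out repeated eigenvalues for a minimal SISO pair, and the Vandermonde congruence $H_g(1,n)=V^\transp\diag(r_1,\dots,r_n)V$ plus Sylvester's law of inertia forces $r_i>0$. You then rebuild \ref{item:sym_real_bal}) by hand from the diagonal realization via an orthogonal diagonalization of $P=Q$. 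The trade-off: the paper's argument is shorter and leverages machinery it needs anyway, while yours is self-contained finite-dimensional linear algebra that makes explicit \emph{why} the poles are real and distinct (the paper gets this only implicitly), essentially reproving the classical moment-problem fact that positive definite Hankel matrices correspond to sums of real exponentials with positive weights. Two cosmetic remarks: your \ref{item:sym_real}) $\Rightarrow$ \ref{item:psd}) via $H_g(1,n)=\Con{n}(A,b)^\transp\Con{n}(A,b)$ is a nice shortcut the paper does not use (it goes through $P=Q=X$ instead); and in \ref{item:cross}) $\Rightarrow$ \ref{item:psd}) the paper avoids operator-level positivity of $\Hank{g}$ by working at a finite $N>n$ with $\sigma(\Con{N}(A,b)\,\Obs{N}(A,c))=\sigma(H_g(1,N))\setminus\{0\}$, which is slightly cleaner than your limit argument, though yours is sound since $\Hank{g}$ has finite rank $n$.
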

\begin{proof}
  \ref{item:cross}) $\Rightarrow$ \ref{item:psd}) Since
  $X=\lim_{N\to\infty} {\cal C}(N){\cal O}(N)$ and $\sigma(X)\subset
  \mathds{R}_{> 0}$, there is an $N>n$, such that $$\mathds{R}_{>
    0}\supset\sigma({\cal C}(N){\cal O}(N))=\sigma({\cal O}(N){\cal
    C}(N))\setminus\{0\}.$$ Hence, ${\cal O}(N){\cal C}(N)=H_g(1,N) \succeq 0$ and as such its principle sub-matrix $H_g(1,n) \succeq 0$. Since $H_g(1,n)$ is non-singular, $H_g(1,n)\succ 0$.\\
\ref{item:psd}) $\Rightarrow$ \ref{item:sym_real_bal}) If
$H_g(1,n)\succ 0$ then it has a symmetric SVD $H_g(1,n)=U\Sigma U^\transp$
and the balanced realization obtained by Kung's algorithm is
symmetric.\\
\ref{item:sym_real_bal}) $\Rightarrow$ \ref{item:parallel}) 
By symmetry of the realization we have $G(z)=b^\transp (zI-A)^{-1} b$.
If $S^\transp AS=\diag(p_1,\dots,p_n)\subset\mathbb{R}^{n\times n}$ is the spectral decomposition of $A$
and $S^\transp b=\tilde b$, then $G(z)=b^\transp S(zI-S^\transp A
S)^{-1}S^\transp b=\sum_{i=1}^n \frac{\tilde b_{i}^2}{z-p_{i}}$.\\
\ref{item:parallel}) $\Rightarrow$ \ref{item:sym_real}) 
If $c=[\sqrt{r_1},\ldots,\sqrt{r_n}]$, $b=c^\transp$, and
$A=\diag(p_1,\dots,p_n)$, then $G(z)=c(zI-A)^{-1}b$. Hence we have a
symmetric minimal realization.\\
\ref{item:sym_real}) $\Rightarrow$ \ref{item:cross}) If $A=A^\transp$
and $b=c^\transp$, then all Gramians are equal, $P=Q=X$.  In
particular $X\succ 0$, if the realization is minimal.
\end{proof}

The last item in \cref{prop:symm} yields the following property of balanced truncation. 
\begin{cor}\label{cor:symm_bt}
	Balanced truncation preserves state-space symmetry, i.e., all truncated models are state-space symmetric. 
\end{cor}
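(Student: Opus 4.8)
The plan is to read the corollary off from equivalence \ref{item:sym_real_bal}) of \cref{prop:symm}. Since $G(z)$ is state-space symmetric, that item furnishes a \emph{balanced} minimal realization $(A,b,c)$ that is simultaneously state-space symmetric, i.e.\ $A=A^\transp$ and $c=b^\transp$. Because the order-$r$ balanced truncation is obtained (\cref{sec:bal}) by extracting the leading blocks $(A_{(1:r),(1:r)},\,b_{(1:r)},\,c_{(1:r)})$, it then suffices to verify that these two defining relations are inherited by the truncated data.

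This verification is immediate. The leading $r\times r$ principal submatrix of a symmetric matrix is symmetric, so $A_{(1:r),(1:r)}=A_{(1:r),(1:r)}^\transp$; and restricting the identity $c=b^\transp$ to its first $r$ coordinates yields $c_{(1:r)}=b_{(1:r)}^\transp$. Hence the truncated realization is again state-space symmetric, which is exactly the assertion of the corollary.

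The only point that requires care is that a balanced realization is determined only up to an orthogonal state transformation $T$ commuting with the diagonal Gramian $\Sigma$ (a sign pattern $\diag(\pm 1)$ when the Hankel singular values are simple, and block-orthogonal mixing on any repeated value). I would check that such transformations preserve state-space symmetry: conjugating the symmetric $A$ by an orthogonal $T$ gives $T^\transp A T$, which is again symmetric, while $(T^\transp b)^\transp = b^\transp T = cT$ keeps the input and output maps transposes of one another. Thus the symmetric balanced realization supplied by \cref{prop:symm} may be used without loss of generality, and its truncation is a bona fide balanced truncation of $G(z)$. With this observation the argument is complete; I expect no substantive obstacle beyond this bookkeeping about the choice of balanced realization.
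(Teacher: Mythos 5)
Your proposal is correct and follows essentially the same route as the paper, which states the corollary as an immediate consequence of item \ref{item:sym_real_bal}) of \cref{prop:symm}: a balanced state-space symmetric minimal realization exists, and truncation to leading blocks manifestly preserves $A=A^\transp$ and $c=b^\transp$. Your additional check that any two balanced realizations differ by an orthogonal transformation commuting with $\Sigma$ (which preserves state-space symmetry) is a worthwhile piece of bookkeeping that the paper leaves implicit, and it is exactly what justifies the word ``all'' in the statement.
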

In fact, this property is also shared by optimal Hankel-norm approximation \cite{liu1998model}.

\subsection{Balanced truncation of totally positive Hankel operator}
A comparison with \cref{cor:totally_hank} reveals that state-space symmetric systems fulfil many of the requirements necessary for Hankel total positivity. {However, there is an important difference, which manifests itself as follows.}
\begin{cor}\label{cor:total_pos}
	For $G(z)$, the following are equivalent:
	\begin{enumerate}
		\item $G(z)$ is Hankel totally positive. \label{item:tot_pos}
		\item $H_g(1,n)\succ 0$ and $H_g(2,n) \succeq 0$. \label{item:psd_2}
		\item $G(z) = \sum_{i=1}^n \frac{r_i}{z-p_i}$ with $r_i > 0$ and $p_i \geq 0$ for all $i$. \label{item:parallel_pos}
		\item $G(z)$ has an internally positive state-space symmetric realization. \label{item:int_pos}
		\item $G(z)$ has a balanced minimal state-space symmetric realization $(A,b,c)$ with $A \succeq 0$.
		\label{item:bal_real_pos}
	\end{enumerate}
\end{cor}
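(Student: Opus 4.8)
The plan is to organize everything around the two results already available—\cref{cor:totally_hank}, which delivers \ref{item:tot_pos}) $\Leftrightarrow$ \ref{item:parallel_pos}), and \cref{prop:symm}, which settles the purely symmetric case (real, but not necessarily nonnegative, poles)—and to bridge the gap between them with a single congruence identity for the shifted Hankel matrix, keeping \ref{item:parallel_pos}) as the hub.

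The key fact I would record first is that for any minimal \emph{state-space symmetric} realization $(A,b,c)$ (so $A=A^\transp$ and $b=c^\transp$) one has $\Obs{n}(A,c)=\Con{n}(A,b)^\transp$: indeed $cA^{i}=b^\transp A^{i}=(A^{i}b)^\transp$, so the rows of $\Obs{n}$ are the transposed columns of $\Con{n}$. Writing $C:=\Con{n}(A,b)$ and using $H_g(1,n)=\Obs{n}(A,c)\Con{n}(A,b)$ and $H_g(2,n)=\Obs{n}(A,c)A\Con{n}(A,b)$ from \cref{sec:Kung}, this yields
\begin{equation*}
H_g(1,n)=C^\transp C,\qquad H_g(2,n)=C^\transp A\,C .
\end{equation*}
Since minimality makes $C$ square and invertible, $H_g(1,n)\succ0$ is automatic, and by Sylvester's law of inertia $H_g(2,n)\succeq0$ holds if and only if $A\succeq0$. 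This identity is the workhorse of the argument.

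With it the equivalences among \ref{item:tot_pos}), \ref{item:psd_2}), \ref{item:parallel_pos}), \ref{item:bal_real_pos}) fall out quickly. For \ref{item:psd_2}) $\Leftrightarrow$ \ref{item:parallel_pos}): by \cref{prop:symm}, $H_g(1,n)\succ0$ already forces a symmetric realization, hence $G(z)=\sum_i r_i/(z-p_i)$ with $r_i>0$ and $p_i\in\mathbb{R}$; the extra hypothesis $H_g(2,n)\succeq0$ then gives $A\succeq0$ via the identity, i.e.\ $p_i=\lambda_i(A)\geq0$, and conversely $p_i\geq0$ makes the symmetric $A$ positive semidefinite. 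For \ref{item:bal_real_pos}), \cref{prop:symm} supplies a \emph{balanced} symmetric realization in the symmetric case, and for symmetric $A$ the condition $A\succeq0$ is just $\lambda_i(A)=p_i\geq0$, giving \ref{item:parallel_pos}) $\Leftrightarrow$ \ref{item:bal_real_pos}). Finally \ref{item:tot_pos}) $\Leftrightarrow$ \ref{item:parallel_pos}) is exactly \cref{cor:totally_hank}, and \ref{item:parallel_pos}) $\Rightarrow$ \ref{item:int_pos}) follows by exhibiting the diagonal realization $A=\diag(p_1,\dots,p_n)$, $b=c^\transp=(\sqrt{r_1},\dots,\sqrt{r_n})^\transp$, which is state-space symmetric and, since $p_i\geq0$ and $r_i>0$, entrywise nonnegative.

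The step I expect to be the real obstacle is the reverse implication \ref{item:int_pos}) $\Rightarrow$ \ref{item:parallel_pos}). Internal positivity only asserts that $A$ is entrywise nonnegative, and a symmetric entrywise-nonnegative matrix may still carry a negative eigenvalue (an off-diagonal $2\times2$ block already does), so nonnegativity of the \emph{entries} does not by itself deliver $p_i\geq0$. To close the argument I would not reason about $A$ directly but instead try to show that the mere existence of an internally positive symmetric realization forces $H_g(2,n)\succeq0$, and then invoke the congruence identity; the delicate point is to exclude the negative-pole configurations that entrywise nonnegativity alone permits, most plausibly by passing to the compound-system characterization of \cref{prop:Hankel_minor} so as to control the higher-order minors rather than the entries of $A$.
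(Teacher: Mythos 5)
Your handling of items \ref{item:tot_pos}), \ref{item:psd_2}), \ref{item:parallel_pos}), \ref{item:bal_real_pos}) is correct, and it is in substance the same congruence argument the paper uses: the paper factorizes $H_g(1,n)=LL^\transp$ and builds $A=L^\dagger H_g(2,n)(L^\dagger)^\transp$ (Kung's algorithm, synthesis direction), while you read off $H_g(1,n)=C^\transp C$ and $H_g(2,n)=C^\transp A C$ from a given symmetric realization (analysis direction); in both cases $H_g(2,n)\succeq 0\Leftrightarrow A\succeq 0$ is a statement about congruence. The one structural difference is that you import \cref{cor:totally_hank} for \ref{item:tot_pos}) $\Leftrightarrow$ \ref{item:parallel_pos}), whereas the paper closes that loop internally via \ref{item:psd_2}) $\Rightarrow$ \ref{item:bal_real_pos}) $\Rightarrow$ \ref{item:tot_pos}) using \cref{ex:sum_first}; this is why the paper can claim its proof yields an alternative derivation of \cref{cor:totally_hank}, a feature your version gives up (logically harmless, since \cref{cor:totally_hank} is quoted from prior work).

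The step you flagged as the real obstacle, \ref{item:int_pos}) $\Rightarrow$ \ref{item:parallel_pos}), is worse than an obstacle: as literally stated, the implication is false, so no strategy---in particular not the compound-system route via \cref{prop:Hankel_minor}---can close it. Take
\begin{equation*}
A=\begin{pmatrix}0&\tfrac{1}{2}\\[2pt] \tfrac{1}{2}&0\end{pmatrix},\qquad b=c^\transp=\begin{pmatrix}1\\0\end{pmatrix}.
\end{equation*}
This realization is minimal, asymptotically stable, internally positive and state-space symmetric, so \ref{item:int_pos}) holds. Yet
$G(z)=\frac{z}{z^2-1/4}=\frac{1/2}{z-1/2}+\frac{1/2}{z+1/2}$
has a negative pole, so \ref{item:parallel_pos}) fails; equivalently, $g=(1,0,\tfrac{1}{4},0,\dots)$ produces the negative $2$-minor $\det\begin{pmatrix}0&1/4\\1/4&0\end{pmatrix}=-\tfrac{1}{16}$ inside $H_g(1,3)$, so \ref{item:tot_pos}) fails, and $H_g(2,2)$ is indefinite, so \ref{item:psd_2}) fails. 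The mechanism is exactly the one you identified: a symmetric, Schur-stable, entrywise nonnegative $A$ need not be positive semidefinite (in continuous time, symmetry plus Hurwitz stability would force $A\prec 0$, which is why the continuous-time analogue of this equivalence is true; the discrete-time statement does not inherit this). The paper's own proof commits precisely the conflation you warned against: for \ref{item:int_pos}) $\Rightarrow$ \ref{item:parallel_pos}) it asserts ``$p_i\ge 0$, since $A\succeq 0$'', silently upgrading $A\ge 0$ (entrywise) to $A\succeq 0$ (semidefinite). (Its ``obvious'' step \ref{item:bal_real_pos}) $\Rightarrow$ \ref{item:int_pos}) also needs the detour through the diagonal realization, since $A\succeq0$ does not give entrywise nonnegativity, but that implication is at least true.) So your instinct was right, your proof of the remaining equivalences is sound, and the correct resolution is not a cleverer argument but a repair of the statement: strengthen \ref{item:int_pos}) to, e.g., ``$G(z)$ has an internally positive state-space symmetric realization with $A\succeq 0$'' (or ``an internally positive diagonal realization''), after which your congruence identity and the spectral decomposition finish the proof.
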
 
\begin{proof}
  \ref{item:tot_pos}) $\Rightarrow$  \ref{item:psd_2})
By definition, total positivity 
implies $H_g(1,n)\succeq 0$ and $H_g(2,n) \succeq 0$. Since $G$ has
order $n$, it follows that $H_g(1,n)$ is nonsingular.\\
\ref{item:psd_2}) $\Rightarrow$ \ref{item:bal_real_pos} Since
$H_g(1,n)\succ 0$, we can factorize $H_g(1,n)=LL^\transp$ to obtain a
balanced symmetric minimal realization, where $b=c^\transp$ is the first
column of $L$ and $A=L^\dagger H_g(2,n) (L^\dagger)^\transp\succeq 0$.\\
\ref{item:bal_real_pos} $\Rightarrow$  \ref{item:int_pos})
This is obvious.\\
\ref{item:int_pos}) $\Rightarrow$  \ref{item:parallel_pos}) As in
\cref{item:parallel} of Proposition \ref{prop:symm}, we obtain the
partial fraction expansion of $G$ where now additionally $p_i\ge 0$,
since $A\succeq 0$.\\
 \ref{item:bal_real_pos} $\Rightarrow$ \ref{item:tot_pos}
This has been discussed in \cref{ex:sum_first}.
\end{proof}

The equivalence of the first two items has already been noted in \cite[Theorem~4.4]{pinkus2009totally}, but since we use realization theory, its proof is greatly simplified and also provides an alternative proof of \cref{cor:totally_hank}. The last item in \cref{cor:total_pos} implies the following property of balanced truncation \cite{liu1998model}. 

\begin{prop}
		Let $G(z)$ be Hankel totally positive. Then, balanced truncation yields Hankel totally positive approximations.
\end{prop}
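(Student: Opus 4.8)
The plan is to reduce the claim to the already-established characterization in \cref{cor:total_pos}, rather than argue directly about Hankel matrices of the truncated system. The key observation is item \ref{item:bal_real_pos} of \cref{cor:total_pos}: a system is Hankel totally positive if and only if it admits a balanced minimal state-space symmetric realization $(A,b,c)$ with $A \succeq 0$. Since balanced truncation operates precisely on such a balanced realization, I would show that the truncated realization inherits all three properties --- symmetry, balancedness, and positive semidefiniteness of the state matrix --- whereupon total positivity of the reduced system follows for free.

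First I would start from a balanced minimal state-space symmetric realization $(A,b,c)$ of $G(z)$ with $A = A^\transp \succeq 0$ and $b^\transp = c$, guaranteed to exist by \cref{cor:total_pos}\ref{item:bal_real_pos}. The balanced truncation to order $r$ is then the principal subsystem $(A_{(1:r),(1:r)}, b_{(1:r)}, c_{(1:r)})$. Because $A$ is symmetric, its $(1:r),(1:r)$ principal submatrix $A_{(1:r),(1:r)}$ is again symmetric, and because $b^\transp = c$, truncating both to the first $r$ coordinates preserves $b_{(1:r)}^\transp = c_{(1:r)}$; thus the truncated realization is state-space symmetric. This is essentially \cref{cor:symm_bt}. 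The balancedness of the truncation is standard and is exactly the content of \cref{prop:kung_bt} together with the finite-time Gramian structure, so the truncated realization is again balanced and minimal (the positive Hankel singular values of the truncation are a subset of those of $G$).

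The one genuinely new ingredient is that $A_{(1:r),(1:r)} \succeq 0$. This is the step I expect to be the main obstacle, but it is in fact immediate from a basic fact about positive semidefinite matrices: any principal submatrix of a symmetric positive semidefinite matrix is itself positive semidefinite. Concretely, for any $x \in \mathbb{R}^r$, padding with zeros to $\hat{x} \in \mathbb{R}^n$ gives $x^\transp A_{(1:r),(1:r)} x = \hat{x}^\transp A \hat{x} \geq 0$, so $A_{(1:r),(1:r)} \succeq 0$. Hence the truncated realization is a balanced minimal state-space symmetric realization whose state matrix is positive semidefinite.

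Finally I would invoke \cref{cor:total_pos} in the reverse direction: since the reduced realization $(A_{(1:r),(1:r)}, b_{(1:r)}, c_{(1:r)})$ satisfies condition \ref{item:bal_real_pos}, the reduced transfer function $G_r(z)$ is Hankel totally positive, which is the assertion. The entire argument is thus a sandwich: \cref{cor:total_pos} converts total positivity into the structured realization condition, truncation preserves that structure (symmetry via \cref{cor:symm_bt}, balancedness via \cref{prop:kung_bt}, and $A \succeq 0$ via the principal-submatrix property), and \cref{cor:total_pos} converts back to total positivity.
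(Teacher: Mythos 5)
Your overall strategy is the same as the paper's: the paper obtains this proposition as a direct consequence of item \ref{item:bal_real_pos}) of \cref{cor:total_pos}, exactly the sandwich you describe. However, one step in your execution is false in the discrete-time setting of this paper: the claim that the truncated realization is again balanced and minimal (and that its Hankel singular values are a subset of those of $G$). That is a continuous-time fact. In discrete time the Gramians satisfy Stein equations, and for a balanced realization $A\Sigma A^\transp - \Sigma + bb^\transp = 0$ the $(1,1)$ block reads
\begin{equation*}
A_{11}\Sigma_1 A_{11}^\transp - \Sigma_1 + b_1 b_1^\transp = -A_{12}\Sigma_2 A_{12}^\transp ,
\end{equation*}
whose right-hand side is generically nonzero; hence the truncated realization is \emph{not} balanced with Gramian $\Sigma_1$ (and it need not even be minimal when $\sigma_r = \sigma_{r+1}$, a case the proposition does not exclude). \Cref{prop:kung_bt} does not say otherwise: it says that the limit of Kung's truncations \emph{is a balanced truncated approximation}, i.e., the truncation of a balanced realization, not that this truncation is itself balanced. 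Since your final step invokes \cref{cor:total_pos} \ref{item:bal_real_pos}) verbatim, which demands a \emph{balanced minimal} state-space symmetric realization with $A \succeq 0$, the proof as written does not close.

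The repair is short, because balancedness and minimality are never needed for the direction you use. From state-space symmetry and $A_{(1:r),(1:r)} \succeq 0$ alone, a spectral decomposition $A_{(1:r),(1:r)} = S\,\diag(p_1,\dots,p_r)\,S^\transp$ with $p_i \geq 0$ and $\tilde{b} = S^\transp b_{(1:r)}$ gives $G_r(z) = \sum_{i=1}^{r} \tilde{b}_i^2/(z-p_i)$, a sum of first order lags with nonnegative residues and nonnegative poles. By \cref{ex:sum_first} (equivalently via item \ref{item:parallel_pos}) of \cref{cor:total_pos} together with \cref{lem:Hankel_EW}), any such sum is Hankel totally positive, regardless of whether the realization at hand is balanced or minimal. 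With that substitution your argument is correct, and it coincides with the paper's intended derivation; your observation that the principal-submatrix property preserves symmetry and positive semidefiniteness is precisely the substantive point.
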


\subsection{Balanced truncation of Hankel $k$-positive systems}
{While the previous results have well-known analogues for continuous-time systems \cite{grussler2012symmetry,liu1998model,willems1976realization}, the general case is our main result, which follows from the following lemma.}
\begin{lem}\label{lem:poles_Hankel}
	Let $G(z)$ be Hankel $k$-positive with $k \leq n$. If $(A,b,c)$ is a minimal realization of $G(z)$ with cross-Gramian $X$, then $\lambda_1(X),\dots,\lambda_{k}(X) > 0$. 
\end{lem}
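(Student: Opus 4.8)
The plan is to combine the eigenvalue identity \eqref{eq:EW_cross_finite} between the finite Hankel matrix and the finite cross-Gramian with the eigenvalue bound for $k$-positive Hankel matrices from \cref{lem:Hankel_EW}, and then to upgrade the resulting nonnegativity to strict positivity using minimality of the realization.

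First I would fix $N > n$ and observe that, since $G(z)$ is Hankel $k$-positive, the truncated Hankel matrix $H_g(1,N)$ lies in $HP_k$. By \cref{lem:Hankel_EW}, item~iii.(a), its $k$ largest eigenvalues in magnitude are real and satisfy $\lambda_1(H_g(1,N)) \geq \dots \geq \lambda_k(H_g(1,N)) \geq 0$. Because these top eigenvalues are nonnegative, the ordering by magnitude coincides with the ordering by value. Invoking \eqref{eq:EW_cross_finite}, which gives $\lambda_i(H_g(1,N)) = \lambda_i(X(N))$ for $1 \leq i \leq n$, transfers this bound to the finite cross-Gramian, so that $\lambda_1(X(N)) \geq \dots \geq \lambda_k(X(N)) \geq 0$.

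Next I would pass to the limit $N \to \infty$. Since the Gramians converge by \eqref{eq:convergence_of_Gramians}, we have $X(N) \to X$ entrywise, and because the magnitude-ordered spectrum of a matrix depends continuously on its entries, $\lambda_i(X) = \lim_{N\to\infty}\lambda_i(X(N))$ for each $i$. This yields $\lambda_1(X) \geq \dots \geq \lambda_k(X) \geq 0$; at this stage only nonnegativity is guaranteed.

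The main obstacle is to exclude the value zero, i.e., to obtain strict inequality, and for this I would use minimality. Passing the identity $X(N)^2 = P(N)Q(N)$ to the limit gives $X^2 = PQ$, and for a minimal, asymptotically stable realization both Gramians $P$ and $Q$ are positive definite. Then $PQ$ is similar to $P^{1/2}QP^{1/2} \succ 0$ via $P^{-1/2}(PQ)P^{1/2} = P^{1/2}QP^{1/2}$, so $\sigma(X^2) \subset \mathbb{R}_{>0}$; in particular $X^2$, and hence $X$, is nonsingular, and every eigenvalue $\mu$ of $X$ satisfies $\mu^2 > 0$, forcing $\mu$ to be real and nonzero. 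Combining $\lambda_i(X) \neq 0$ with the bound $\lambda_i(X) \geq 0$ for $1 \leq i \leq k$ then yields $\lambda_i(X) > 0$, as claimed. I expect the only delicate point to be the continuity argument for the ordered eigenvalues under the limit $N\to\infty$, which is routine but should be stated with care.
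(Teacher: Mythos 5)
Your proposal is correct and takes essentially the same route as the paper's proof: nonnegativity of the top $k$ eigenvalues of $H_g(1,N)$ from \cref{lem:Hankel_EW}, transfer to the cross-Gramian via \eqref{eq:EW_cross_finite}, a limiting argument as $N\to\infty$, and minimality to upgrade to strict positivity. The only difference is one of packaging: the paper gets strictness from $\rk(\Hank{g})=n$ (a zero among the magnitude-ordered top $k$ would force rank below $k$), while you get it from $X^2=PQ$ with $P,Q\succ 0$ — the same minimality fact in a different guise — and both arguments share the tie-breaking subtlety in the eigenvalue-continuity step that you correctly flagged as the delicate point.
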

\begin{proof}
 	Using \cref{lem:Hankel_EW}, it follows for $N \geq k$ that $\lambda_1(H_g(1,N)),\dots,\lambda_{k}(H_g(1,N)) > 0$. Since $\lambda_i(\Hank{g})  = \lim_{N\to\infty} \lambda_{i}(H_g(1,N))$, by the continuity of the eigenvalues (see e.g. \cite{horn2012matrix}), the result follows because $\rk(\Hank{g}) = n$ and $\lambda_i(X) = \lambda_i(\Hank{g})$.
\end{proof}

\begin{thm}\label{thm:main}
	Let $G(z)$ be Hankel $k$-positive and $r \leq k$. Then, if {$\sigma_r(\Hank{g}) \neq \sigma_{r+1}(\Hank{g})$}, balanced truncation to order $r$ yields an asymptotically stable Hankel totally positive approximation.
\end{thm}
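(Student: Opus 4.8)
The plan is to prove that the realization obtained by truncating Kung's balanced realization is state-space symmetric with a positive semidefinite state matrix; Hankel total positivity is then immediate from item~\ref{item:bal_real_pos} of \cref{cor:total_pos}. I work at a fixed level $N>n$ with the finite-time balanced realization of \cref{sec:Kung} and pass to the limit through \cref{prop:kung_bt}. Writing the symmetric eigendecomposition $H_g(1,N)=W\Lambda W^\transp$, the symmetry of the Hankel matrix gives $U=W$, $\Sigma=|\Lambda|$ and $V=WD$ with $D=\diag(\sign\lambda_i)$ in the SVD $H_g(1,N)=U\Sigma V^\transp$. Hence $\tilde b=D\tilde c^\transp$ and $\tilde A=\Sigma^{-1/2}U^\transp H_g(2,N)U\,D\,\Sigma^{-1/2}$. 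By \cref{lem:poles_Hankel}, \cref{eq:EW_cross_finite} and continuity of eigenvalues, the $r\le k$ dominant eigenvalues of $H_g(1,N)$ are positive for all large $N$, so the relevant signs in $D$ are $+1$; the $r\times r$ truncation therefore satisfies $b_r(N)=c_r(N)^\transp$ and
\begin{equation*}
A_r(N)=\Lambda_r^{-1/2}B(N)\Lambda_r^{-1/2},\qquad B(N):=W_r^\transp H_g(2,N)W_r,
\end{equation*}
with $W_r=U_{(:,1:r)}$ and $\Lambda_r=\diag(\lambda_1,\dots,\lambda_r)\succ0$. Thus $A_r(N)$ is symmetric and $A_r(N)\succeq0$ iff $B(N)\succeq0$. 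Letting $N\to\infty$ (the gap $\sigma_r(\Hank g)\neq\sigma_{r+1}(\Hank g)$ makes the dominant subspace well defined and convergent), the reduced realization $(A_r,b_r,c_r)$ is balanced and state-space symmetric, and it remains only to show $A_r=\lim_N A_r(N)\succeq0$.

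The crux is $B\succeq0$, which I would first prove when $G$ is Hankel \emph{strictly} $k$-positive. A useful observation is that $H_g(2,N)$ is $\tp{k}$ as well, since each of its $j$-minors is a $j$-minor of the larger Hankel matrix and hence nonnegative for $j\le k$; under strictness these minors are positive, so \cref{lem:compound_mat} shows that the compounds $\compound{H_g(1,N)}{s}$ and $\compound{H_g(2,N)}{s}$ are entrywise positive for every $s\le k$. Fix $s\le r$ and apply the Cauchy--Binet identity of \cref{lem:compound_mat} to the leading block of $B(N)$:
\begin{equation*}
\det\!\big(W_s^\transp H_g(2,N)W_s\big)=\compound{(W_s)}{s}^{\transp}\,\compound{H_g(2,N)}{s}\,\compound{(W_s)}{s},\qquad W_s:=U_{(:,1:s)}.
\end{equation*}
By \cref{lem:compound_mat}, $\compound{(W_s)}{s}$ is the eigenvector of $\compound{H_g(1,N)}{s}$ for its largest eigenvalue $\prod_{i=1}^s\lambda_i$, and since $\compound{H_g(1,N)}{s}$ is a positive matrix, \cref{PF} forces this Perron vector to be strictly positive after a sign choice. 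As $\compound{H_g(2,N)}{s}$ is entrywise positive, the right-hand side is strictly positive. Every leading principal minor of $B(N)$ is therefore positive, so Sylvester's criterion gives $B(N)\succ0$, hence $A_r(N)\succ0$ and, in the limit, $A_r\succ0$; by item~\ref{item:bal_real_pos} of \cref{cor:total_pos} the reduced system is Hankel totally positive with strictly positive poles.

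The general Hankel $k$-positive case follows by continuity. Using \cref{lem:Hankel_EW}, approximate $g$ by strictly $k$-positive impulse responses $g^{(\ell)}\to g$, e.g.\ by adding a vanishing relaxation system with at least $k$ distinct positive modes, which lands in the interior of the cone $HP_k$ and hence keeps the sum strictly $k$-positive. The gap condition guarantees that the dominant eigenprojector, and therefore $B$ and $A_r$, depend continuously on $g$, so that $A_r^{(\ell)}\succ0$ passes to $A_r\succeq0$ in the limit; together with state-space symmetry this yields Hankel total positivity via \cref{cor:total_pos}. Asymptotic stability is inherited from the standard fact that balanced truncation with a Hankel singular value gap preserves asymptotic stability, placing the reduced poles in $[0,1)$. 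I expect the semidefinite step $B\succeq0$ to be the main obstacle: the Perron--Frobenius/Cauchy--Binet argument controls only the \emph{leading} principal minors, because the compound eigenvectors attached to non-dominant eigenvalues carry no sign information; this is precisely why reducing to the strict positive-definite case and then invoking continuity, rather than checking all principal minors directly, is the natural route.
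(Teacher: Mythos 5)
Your proposal is correct and takes essentially the same route as the paper's own proof: a finite-$N$ balanced realization from Kung's algorithm, symmetry of the truncated state matrix, Sylvester's criterion with the leading principal minors $\det\left(U_j(N)^\transp H_g(2,N)\,U_j(N)\right)$ made positive via Cauchy--Binet and Perron--Frobenius applied to the compound matrices, the strict case first with the non-strict case recovered by density (\cref{lem:Hankel_EW}), stability from the singular value gap, and finally $N\to\infty$. The only differences are cosmetic: you make the SVD sign matrix explicit and perturb the system rather than the finite Hankel matrices, and the sign-choice issue for $C_j(U_j(N))$ that you flag is exactly the point the paper isolates in \cref{rem:assumeCjpositive}.
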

\begin{proof}
Since Hankel $k$-positivity implies Hankel $r$-positivity, $r \leq k$, it suffices to consider the case $k = r$. It is known that
balanced truncation to order $k$ preserves asymptotic stability, if
$\sigma_k(\Hank{g})>\sigma_{k+1}(\Hank{g})$ (e.g.\
\cite{hinrichsen1990improved}). \\
To prove total positivity assume first that $G$ is \emph{strictly} Hankel $k$-positive.
  As before let $\sigma_i(H_g(1,N))$ denote the $i$-th singular value of
  $H_g(1,N)$. Then $\sigma_i(H_g(1,N))$ converges to
  $\sigma_i(\Hank{g})$ for $N\to\infty$. Hence, for sufficiently large
  $N$, we have $\sigma_k(H_g(1,N))>\sigma_{k+1}(H_g(1,N))$. Since
  $G(z)$ is Hankel $k$-positive, all $H_g(1,N)$ are $k$-positive and
  thus  $\sigma_i(H_g(1,N))=\lambda_i(H_g(1,N))$ for $i=1,\ldots,k$.
Let $u_1(N),\ldots,u_k(N)$ be a corresponding set of orthonormal
eigenvectors and define $U_j(N)=[u_1(N),\ldots,u_j(N)]\in\mathbb{R}^{N\times
  j}$ for $1\le j\le k$.  Then, following
  Kung's algorithm described in subsection \ref{sec:Kung}, a balanced
  truncated approximation is given by
  $$A_k(N)=\Sigma_k(N)^{-\frac12}U_k(N)^\transp H_g(2,N)
  U_k(N)\Sigma_k(N)^{-\frac12},$$ $c_k(N)=b_k(N)^\transp$ equal to the 1st row
  of $\Sigma_k(N)^{-\frac12}U_k(N)$. \\
It is evident, that $A_k(N)$ is symmetric.
In view of Corollary \ref{cor:total_pos}, we need to show that
$A_k(N)\succeq0$.  This follows from Sylvester's criterion, if
\begin{align}\label{eq:Sylvester_condition}
\det \left(U_j(N)^\transp H_g(2,N) U_j(N)\right)&>0
\end{align}
 for all
$j=1,\ldots, k$.  By Lemma \ref{lem:compound_mat} the compound matrix
$C_j(U_j(N))$ is an eigenvector of the positive matrix $C_j(H_g(1,N))$ corresponding to
the eigenvalue $$\lambda_1(C_j(H_g(1,N)))=\prod_{i=1}^j
\lambda_i(H_g(1,N))>0.$$ Hence we can assume that $C_j(U_j(N))$ is
positive (see also Remark \ref{rem:assumeCjpositive} below).   Together with the positivity of $H_g(2,N)$ we have
\begin{align*}
0&<C_j(U_j(N))^T C_j(H_g(2,N))C_j(U_j(N))\\&= C_j(U_j(N)^T H_g(2,N) U_j(N))\\&=\det(U_j(N)^T H_g(2,N)U_j(N)), 
\end{align*}
which is \eqref{eq:Sylvester_condition}. \\
We conclude that the $N$-balanced reduced system is strictly totally
positive. By \cref{lem:Hankel_EW}, the result follows also for the
non-strict case. Finally, letting $N\to\infty $ yields the
corresponding statements for $\Hank{g}$.
\end{proof}
Thus, systems with $k$-positive Hankel operators have approximations
that naturally correspond to their characteristic dominant dynamics. In particular, we want to single out the following important case for $k=1$. 
\begin{cor}
	Let $G(z)$ be externally positive. Then, its first order balanced truncated approximation is externally positive.  
\end{cor}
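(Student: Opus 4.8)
The plan is to recognise the statement as the borderline case $k = r = 1$ of \cref{thm:main}. Indeed, external positivity of $G(z)$ means precisely $g \geq 0$, which is the same as $G(z)$ being Hankel $\tp{1}$, since the $1$-minors of every $H_g(1,N)$ are exactly the entries $g(t)$. Applying \cref{thm:main} with $k = r = 1$ then yields a first order balanced truncation that is Hankel totally positive; by \cref{cor:totally_hank} such a first order system has the form $\frac{r_1}{z - p_1}$ with $r_1 > 0$ and $p_1 \geq 0$, whose impulse response $t \mapsto r_1 p_1^{t-1}$ is nonnegative, i.e., externally positive.

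First I would record that in this case the proof of \cref{thm:main} collapses to a single application of the Perron-Frobenius theorem, bypassing both the compound-matrix machinery and Sylvester's criterion. Since $g \geq 0$, both $H_g(1,N)$ and $H_g(2,N)$ are entrywise nonnegative. By \cref{PF} the leading eigenvector $u_1(N)$ of $H_g(1,N)$ may be chosen nonnegative, and Kung's realization (subsection \ref{sec:Kung}) gives the scalar
\begin{equation*}
A_1(N) = \sigma_1(N)^{-1}\, u_1(N)^\transp H_g(2,N)\, u_1(N) \geq 0,
\end{equation*}
because it is a nonnegative quadratic form evaluated at a nonnegative vector. As the realization is symmetric, $c_1(N) = b_1(N)^\transp$, so the reduced impulse response equals $b_1(N)^2 A_1(N)^{t-1} \geq 0$; passing to the limit $N \to \infty$ as in \cref{prop:kung_bt} preserves this nonnegativity.

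The main point to be careful about is that, unlike \cref{thm:main}, the corollary does not assume the spectral gap $\sigma_1(\Hank{g}) \neq \sigma_2(\Hank{g})$. When the leading singular value is simple this is immaterial, but in the degenerate case the leading eigenvector -- and hence the order one truncation -- need no longer be unique. Here the strengthened conclusion of \cref{PF} is what saves the argument: the leading eigenspace still admits a nonnegative eigenvector, so $A_1(N) \geq 0$ can be secured regardless. The only remaining obstacle is therefore not positivity but convergence: one must ensure that the nonnegative choice of $u_1(N)$ can be made to converge as $N \to \infty$, consistently with the convergence of the Gramians underlying \cref{prop:kung_bt}, so that the limiting first order model is genuinely externally positive.
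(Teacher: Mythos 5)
Your proposal is correct and takes essentially the same approach as the paper: the corollary is presented there with no separate proof, as the immediate instance $k=r=1$ of \cref{thm:main} (external positivity being exactly Hankel $1$-positivity), and your observation that the theorem's argument collapses to a single Perron--Frobenius application is precisely what its proof reduces to when the compound matrices are $1\times 1$ and Sylvester's criterion is scalar. The degenerate case $\sigma_1(\Hank{g}) = \sigma_2(\Hank{g})$ that you flag is the same caveat the paper itself concedes in \cref{rem:assumeCjpositive} --- without the gap, only the existence of an externally positive first order truncation is guaranteed, not the positivity of every one --- so your treatment is consistent with, and slightly more explicit than, the paper's.
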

\begin{rem}\label{rem:assumeCjpositive}
\begin{enumerate}
    \item  A word on the assumption $C_j(U_j(N))>0$ in the previous
    proof might be helpful. By Lemma \ref{lem:compound_mat} there
    exist eigenvectors $\tilde u_1(N),\ldots,\tilde u_k(N)$, forming a
    matrix $\tilde U_j(N)$, such that
    $C_j(\tilde U_j(N))>0$ for all $j\le k$. These eigenvectors may
    differ from $u_1(N),\ldots,u_k(N)$, but span the same space.
    Therefore $U_k(N)=\tilde U_k(N)S$ where $S$ is an orthogonal
    matrix. This transformation amounts to a similarity transformation
    of the reduced system.
\item If we drop the assumption that
  $\sigma_r(\Hank{g})>\sigma_{r+1}(\Hank{g})$ then the reduced system might
  not be asymptotically stable. Moreover, our proof does not guarantee
  total positivity of every balanced truncated approximation to order
  $k$, although it still holds true that there exists such a truncation.
\end{enumerate}
\end{rem}

\section{Multi-Input-Multi-Output Systems}
It is easy to see that our results extend to MIMO systems with symmetric Hankel operators, i.e., $\Hank{g} = \Hank{g}^\transp$. However, the following result for internally positive systems suggests that we can even leap beyond that.
\begin{thm}
	Let $(A,B,C)$ be an internally positive MIMO system. Then, there exists an asymptotically stable, internally positive, balanced truncated first order approximation. \label{thm:first_order_internal}
\end{thm}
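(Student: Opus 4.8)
The plan is to exploit internal positivity at the level of the Gramians and to read off the first-order balanced truncation as a rank-one Perron projection. Write the reachability and observability Gramians of $(A,B,C)$ as $P=\sum_{t\ge0}A^{t}BB^{\transp}{A^{\transp}}^{t}$ and $Q=\sum_{t\ge0}{A^{\transp}}^{t}C^{\transp}CA^{t}$, the solutions of the Stein equations $APA^{\transp}-P+BB^{\transp}=0$ and $A^{\transp}QA-Q+C^{\transp}C=0$. Because $A,B,C\ge0$, every power $A^{t}B$ and $CA^{t}$ is entrywise nonnegative, so each summand $A^{t}B(A^{t}B)^{\transp}$ and $(CA^{t})^{\transp}(CA^{t})$ is entrywise nonnegative; hence $P\ge0$, $Q\ge0$, and therefore the product $PQ\ge0$ is an entrywise nonnegative matrix. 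This is the observation that turns Perron--Frobenius into a statement about balancing directions.

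First I would locate the dominant balancing direction. The eigenvalues of $PQ$ are exactly the squared Hankel singular values $\sigma_i(\Hank{g})^{2}\ge0$, so the Perron root of the nonnegative matrix $PQ$ is $\sigma_1(\Hank{g})^{2}>0$. By \cref{PF} there is a nonnegative eigenvector $V\ge0$ with $PQV=\sigma_1^{2}V$, which I normalise by $V^{\transp}QV=\sigma_1$ (possible since $PQV\ne0$ forces $QV\ne0$ and hence $V^{\transp}QV>0$). Setting $W:=QV/\sigma_1$ then gives $W\ge0$ automatically, together with $W^{\transp}V=1$ and $QPW=\sigma_1^{2}W$. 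A short computation with the balancing transformation $T$ (whose first column is $V$, and whose inverse has first row $\sigma_1^{-1}V^{\transp}Q=W^{\transp}$) shows that the order-one balanced truncation is precisely
\begin{equation*}
(\hat A,\hat B,\hat C)=(W^{\transp}AV,\;W^{\transp}B,\;CV).
\end{equation*}
Since $V,W\ge0$ and $A,B,C\ge0$, all three blocks are nonnegative, so the reduced first-order system is internally positive.

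It remains to establish asymptotic stability, i.e.\ $0\le\hat A<1$. Here I would pass to balanced coordinates $(\tilde A,\tilde B,\tilde C)=(T^{-1}AT,T^{-1}B,CT)$, in which the reachability Gramian is $\Sigma=\diag(\sigma_1,\dots,\sigma_n)$ and the Stein equation reads $\tilde A\Sigma\tilde A^{\transp}+\tilde B\tilde B^{\transp}=\Sigma$. Reading off its $(1,1)$ entry gives
\begin{equation*}
\sigma_1\hat A^{2}+\sum_{j\ge2}\sigma_j\tilde A_{1j}^{2}+\|\hat B\|^{2}=\sigma_1,
\end{equation*}
so $\hat A^{2}\le1$, with equality forcing $\hat B=0$ and $\tilde A_{1j}=0$ for all $j\ge2$. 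The latter makes the first row of the reachability matrix $\begin{pmatrix}\tilde B & \tilde A\tilde B & \cdots\end{pmatrix}$ vanish, contradicting minimality (all $\sigma_j>0$). Hence $\hat A^{2}<1$, and since $\hat A\ge0$ the reduced system is asymptotically stable.

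I expect the delicate point to be exactly this last step in the presence of a repeated dominant Hankel singular value. When $\sigma_1$ has multiplicity greater than one the dominant eigenspace of $PQ$ is not a line, balanced truncation to order one is no longer unique, and not every such truncation need be stable; the theorem only asserts existence. The role of the multiplicity part of \cref{PF} is then to guarantee a nonnegative eigenvector $V$ inside that eigenspace, and the Stein-equation bound above, together with minimality, shows that this particular Perron choice is always asymptotically stable. For a non-minimal starting realization I would first restrict to the reachable-and-observable part, noting that the dominant mode satisfies $V=PQV/\sigma_1^{2}$ and hence already lies in the range of $P$, so the construction is unaffected.
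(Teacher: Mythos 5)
Your proposal is correct and rests on the same core mechanism as the paper's proof: entrywise nonnegativity of $P$ and $Q$ makes $PQ$ a nonnegative matrix, and a Perron--Frobenius eigenvector of $PQ$ supplies a truncation direction along which $(W^\transp A V,\, W^\transp B,\, CV)$ inherits nonnegativity. You depart from the paper in two details, both to the good in the discrete-time setting. First, the paper invokes \cref{PF} twice, once for a nonnegative right eigenvector $v_1$ of $PQ$ and once (``analogously'') for a nonnegative left eigenvector $w_1$, leaving the pairing and normalization $w_1^\transp v_1 = 1$ implicit; your choice $W = QV/\sigma_1$ gets nonnegativity of the left vector for free from $Q \geq 0$, $V \geq 0$, and is automatically the correct first row of $T^{-1}$, since $T^{-1} = \Sigma^{-1}T^\transp Q$ for any balancing transformation $T$ with first column $V$. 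Second, and more substantially, the paper splits the stability argument on the multiplicity $k_1$ of $\sigma_1$: for $k_1 = 1$ it appeals to standard balanced-truncation theory, while for $k_1 > 1$ it warns that $A_1$ could be marginally stable and only argues that \emph{some} stable first-order truncation exists. Your $(1,1)$-entry Stein-equation argument is uniform in the multiplicity: equality $\hat A^2 = 1$ would force $\hat B = 0$ and $\tilde A_{1j} = 0$ for $j \geq 2$, zeroing the first row of the reachability matrix and contradicting minimality. This actually proves something slightly stronger than the paper states, namely that in discrete time \emph{every} order-one balanced truncation of a minimal stable realization is asymptotically stable; the paper's caution is genuinely needed only in continuous time, where the Lyapunov $(1,1)$ entry $2\sigma_1\hat A + \|\hat B\|^2 = 0$ does not constrain $\tilde A_{1j}$, $j \geq 2$, so your argument would not transfer there (the paper's proof parenthetically covers that case). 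Two small caveats: like the paper, you implicitly assume the given realization is asymptotically stable so that the Gramians exist; and your closing remark on non-minimal realizations is the one hand-wavy spot, since the reachable-and-observable part of an internally positive system need not be internally positive---though your construction never needs that, as it only uses $PQ \geq 0$ for the original realization, which is how the paper's $\diag(\Sigma^2,0)$ bookkeeping handles the same issue.
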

\begin{proof} 
	Let $P$ and $Q$ be the controllability and observability Gramians of $(A,B,C)$. Obviously, $P, Q \in \Rnn_{\geq 0}$ and thus $PQ \in \Rnn_{\geq 0}$, too. Balancing
	the system via a state-space transformation $x = T\xi$ yields
	$T^{-1}PQT = \text{diag}\begin{pmatrix} \Sigma^2,0 \end{pmatrix}$,
	where $\Sigma = \text{diag}\begin{pmatrix} \sigma_1 I_{k_1}, \dots ,
	\sigma_N I_{k_N}\end{pmatrix},$ containing the Hankel singular
	values $\sigma_1 > \cdots > \sigma_N$, with corresponding
	multiplicities $k_1,\dots,k_N$. Hence, the columns of $T$ are eigenvectors of $PQ$ and by
	\cref{PF} there exists a nonnegative right-eigenvector $v_1$ to the largest eigenvalue $\sigma_1$, i.e. $PQ v_1 = \sigma_1 v_1 \ \text{with} \ T = \begin{pmatrix}v_1,\dots,v_n
	\end{pmatrix}$. Analogously, there is a nonnegative left-eigenvector $w_1$ with $T^{-1} = \begin{pmatrix}
	w_1,\dots ,w_n \end{pmatrix}^T$. If $k_1 = 1$, the asymptotic stability of the reduced system of order 1 is given by  nonnegative $B_1 = w_1^{T}B$ and $C_1 = C v_1 \geq 0$ as well as $A_1 = w_1^{T}Av_1$, where $A_1$ is  positive in discrete-time and negative in continuous-time.
	
	If $k_1>1$, it could happen that $A_1$ is only marginally stable. But since the reduced
	system of order $k_1$ (belonging to all $\sigma_1$) is asymptotically stable, there
	must exist at least one asymptotically stable first order
	approximation. Further, by \cref{PF} we conclude the reducibility of $PQ$ and thus the internal positivity of each first order approximation. %
\end{proof}

\section{Example}
\label{sec:example}
We consider an illustrative example to demonstrate how Hankel $k$-positivity emerges from relaxation systems as well as to show that Hankel $k$-positive system do not allow for much larger Hankel totally positive approximations than up to order $k$. To this end, let
\begin{align*}
G_k(z) = \sum_{j=1}^6 \frac{1}{z-\frac{1}{10-j}} - \frac{r_k}{z-0.3},  r_k \geq 0
\end{align*}
where the parameter vector $r=(r_1,r_2,r_3,r_4,r_5,r_6,r_7)$,
$$r = \begin{pmatrix}
6 &  1.1538 & 0.3125 & 0.0769 & 0.0132 & 0.0011 & 0
\end{pmatrix}$$ contains the threshold values up to which $G_k(z)$ is Hankel 
$k$-positive. Note, e.g., that by \cref{cor:totally_hank}, $G_k(z)$ cannot be Hankel totally positive for $r_k > 0$. 
For each $r_k$, the largest orders $o_k$ for which balanced truncation of $G_k(z)$
yields a relaxation system are then contained  in the vector $o = \begin{pmatrix}1 & 2 & 4 & 5 & 6 & 6 & 6
\end{pmatrix}$. This demonstrates that the positivity degree may be quite sharp for determining a priori the largest truncation order for which Hankel totally positive approximations can be expected. 

{{It follows from \cref{cor:totally_hank} that $o_k$ also determines the order up to which balanced truncation returns an internal positive realizable approximation, which is independent of a particular system realization. In contrast, \cite{reis2009positivity,sootla2012scalable,feng2010internal} require internally positive realizations to begin with, which leads to internally positive approximations with conservative errors after the reduction of only a few states \cite{grussler2012model,grussler2012symmetry}. For example, applying \cite{reis2009positivity} for obtaining a fifth order approximation of $G_7(z)$ with realization $A = \diag(0.9,\dots,0.4)$, $b^\transp = c = \begin{pmatrix} 1 & \dots & 1\end{pmatrix}$, simply removes the dynamics of the fastest pole, resulting in a relative $H_{\infty}$-error of $6.8 \cdot 10^{-2}$. Balanced truncation to order $2$, however, has only an error of $8.8 \cdot 10^{-3}$. 

Our example, further, suggests that small imperfection, e.g., in the measurement of the impulse response may make it impossible to identify a truly underlying Hankel totally positive system. Then, our results indicate that balanced truncation may be used to damp the contribution of these imperfections by finding a nearby Hankel totally positive approximation. 

Finally, note that systems such as $G_k(z)$ can be found as the linear part of a perceptron within neural networks \cite{grussler2020variation}. }}

	\section{Conclusion}
	In this work, we have addressed the problem of finding reduced order models that consist of a parallel interconnection of first order lags. While approximating a system with a relaxation or an internally positive system generally requires new algorithms, our results show that for the class of Hankel $k$-positive systems it suffices to use balanced truncation. Interestingly, this proves that balanced truncation yields approximations, which are of the same form as the system's dominant dynamics. So far, this has only been observed for the reduction of relaxation systems \cite{liu1998model}. In particular, reduction of an externally positive system to order $1$ will always provide an internally positive approximation, which often outperforms specialized internally positivity preserving reduction methods. Further, our example indicates that the Hankel positivity degree is often close to the largest possible order for which balanced truncation yields a relaxation system. 
	
	Nonetheless, our results also face limitations:
        (i) for large system, it may be computationally difficult to verify Hankel
        $k$-positivity, (ii) our results mainly apply to systems with symmetric Hankel operator. In the future, we hope to overcome the first
        limitation through extensions to the class of Hankel internally
        $k$-positive systems, i.e., systems with internally positive compound
        systems. In particular, as this class requires $A$ to be
        $k$-positive, it will connect to recent
        investigations of autonomous internally $k$-positive systems
        in \cite{weiss2019generalization,margaliot2018revisiting,seidi2020discrete,weiss2021cooperative}. Concerning the second limitation, our result on the reduction of internal positive systems to order~$1$ indicates that extensions to systems with non-symmetric Hankel operator are plausible. 
        
      {{In the future, it would be interesting to extend these results to the Toeplitz operator. Another important question is whether our results also extend to optimal low-rank Hankel approximations. The example in \cite{grussler2016low} suggests an affirmative answer. In particular, this would result in so-called \emph{completely positive} approximations \cite{berman1979nonnegative} with the attractive feature of having a rank-revealing nonnegative matrix factorization \cite{gillis2020nonnegative}.    
        
        Finally note that our results can also be readily extended to continuous-time systems. }}

\section*{Acknowledgment}
The research leading to these results was completed while the first author was a postdoctoral research associate at the University of Cambridge. The research has received funding from the European Research Council under the Advanced ERC Grant Agreement Switchlet n.670645.

\bibliographystyle{plain}

\bibliography{refkpos,refopt,refpos,science}

\begin{thebibliography}{10}

\bibitem{seidi2020discrete}
R.~{Al-Seidi}, M.~{Margaliot}, and J.~{Garloff}.
\newblock Discrete-time $k$-positive linear systems.
\newblock {\em IEEE Transactions on Automatic Control}, pages 1--1, 2020.

\bibitem{berman1979nonnegative}
A.~Berman and R.~Plemmons.
\newblock {\em Nonnegative Matrices in the Mathematical Sciences}.
\newblock SIAM, 1994.

\bibitem{drummond2019external}
R.~{Drummond}, M.~C. {Turner}, and S.~R. {Duncan}.
\newblock External positivity of linear systems by weak majorisation.
\newblock In {\em 2019 American Control Conference (ACC)}, pages 5191--5196,
  2019.

\bibitem{fallat2017total}
Shaun Fallat, Charles~R. Johnson, and Alan~D. Sokal.
\newblock Total positivity of sums, hadamard products and hadamard powers:
  Results and counterexamples.
\newblock {\em Linear Algebra and its Applications}, 520:242 -- 259, 2017.

\bibitem{Farina2000}
L.~Farina and S.~Rinaldi.
\newblock {\em Positive linear systems: theory and applications}.
\newblock Pure and applied mathematics (John Wiley \& Sons). Wiley, 2000.

\bibitem{farina2011positive}
Lorenzo Farina and Sergio Rinaldi.
\newblock {\em Positive Linear Systems: Theory and Applications}.
\newblock John Wiley \& Sons, 2011.

\bibitem{feng2010internal}
June Feng, James Lam, Zhan Shu, and Qing Wang.
\newblock Internal positivity preserved model reduction.
\newblock {\em Int. Journal of Control}, 83(3):575--584, 2010.

\bibitem{fiedler2008special}
Miroslav Fiedler.
\newblock {\em Special matrices and their applications in numerical
  mathematics}.
\newblock Courier Corporation, 2008.

\bibitem{frobenius1912matrizen}
Georg Frobenius.
\newblock {\"U}ber {M}atrizen aus nicht negativen {E}lementen.
\newblock 1912.

\bibitem{gillis2020nonnegative}
Nicolas Gillis.
\newblock {\em Nonnegative Matrix Factorization}.
\newblock Society for Industrial and Applied Mathematics, Philadelphia, PA,
  2020.

\bibitem{grussler2012symmetry}
C.~Grussler and T.~Damm.
\newblock A symmetry approach for balanced truncation of positive linear
  systems.
\newblock In {\em 51st IEEE Conference on Decision and Control (CDC)}, pages
  4308--4313, 2012.

\bibitem{grussler2016low}
C.~Grussler, A.~Rantzer, and P.~Giselsson.
\newblock Low-rank optimization with convex constraints.
\newblock {\em IEEE Transactions on Automatic Control}, 63(11):4000--4007, Nov
  2018.

\bibitem{grussler2012model}
Christian Grussler.
\newblock Model reduction of positive systems.
\newblock {M}.{S}c. {T}hesis, Lund University, Department of Automatic Control,
  2012.

\bibitem{grussler2019tractable}
Christian Grussler and Anders Rantzer.
\newblock Second-order cone certification for external positivity.
\newblock arXiv:1906.06139, 2019.

\bibitem{grussler2020variation}
Christian Grussler and Rodolphe Sepulchre.
\newblock Variation diminishing linear time-invariant systems.
\newblock arXiv:2006.10030, 2020.

\bibitem{hinrichsen1990improved}
D.~{Hinrichsen} and A.~J. {Pritchard}.
\newblock An improved error estimate for reduced-order models of discrete-time
  systems.
\newblock {\em IEEE Transactions on Automatic Control}, 35(3):317--320, 1990.

\bibitem{horn2012matrix}
Roger~A. Horn and Charles~R. Johnson.
\newblock {\em Matrix Analysis}.
\newblock Cambridge University Press, 2 edition, 2012.

\bibitem{karlin1968total}
Samuel Karlin.
\newblock {\em Total positivity}, volume~1.
\newblock Stanford University Press, 1968.

\bibitem{kung1987identification}
S.~Y. Kung.
\newblock A new identification and model reduction algorithm via singular value
  decomposition.
\newblock {\em 12th {A}silomar {C}onference on {C}ircuits, {S}ystems and
  {C}omputers}, pages 705--714, 1978.

\bibitem{liu1998model}
W.Q. Liu, V.~Sreeram, and K.L. Teo.
\newblock Model reduction for state-space symmetric systems.
\newblock {\em Systems {\&} Control Letters}, 34(4):209 -- 215, 1998.

\bibitem{luenberger1979introduction}
David Luenberger.
\newblock {\em Introduction to Dynamic Systems: Theory, Models \&
  Applications}.
\newblock John Wiley \& Sons, 1979.

\bibitem{margaliot2018revisiting}
Michael Margaliot and Eduardo~D. Sontag.
\newblock Revisiting totally positive differential systems: A tutorial and new
  results.
\newblock {\em Automatica}, 101:1 -- 14, 2019.

\bibitem{markovsky2012low}
Ivan Markovsky and KONSTANTIN Usevich.
\newblock {\em Low rank approximation}.
\newblock Springer, 2012.

\bibitem{pates2019optimal}
Richard Pates, Carolina Bergeling, and Anders Rantzer.
\newblock On the optimal control of relaxation systems.
\newblock arXiv:1909.07219, 2019.

\bibitem{perron1907theorie}
Oskar Perron.
\newblock Zur theorie der matrices.
\newblock {\em Mathematische Annalen}, 64(2):248--263, Jun 1907.

\bibitem{pinkus2009totally}
Allan Pinkus.
\newblock {\em Totally Positive Matrices}.
\newblock Cambridge Tracts in Mathematics. Cambridge University Press, 2009.

\bibitem{rantzer2015scalable}
Anders Rantzer.
\newblock Scalable control of positive systems.
\newblock {\em European Journal of Control}, 24:72 -- 80, 2015.

\bibitem{reis2009positivity}
Timo Reis and Elena Virnik.
\newblock Positivity preserving balanced truncation for descriptor systems.
\newblock {\em SIAM Journal on Control and Optimization}, 48(4):2600--2619,
  2009.

\bibitem{son1996robust}
N.~K. Son and D.~Hinrichsen.
\newblock Robust stability of positive continuous time systems.
\newblock {\em Numerical Functional Analysis and Optimization},
  17(5-6):649--659, 1996.

\bibitem{sootla2012scalable}
A.~Sootla and A.~Rantzer.
\newblock Scalable positivity preserving model reduction using linear energy
  functions.
\newblock In {\em 51st IEEE Conference on Decision and Control (CDC)}, pages
  4285--4290, 2012.

\bibitem{tanaka2011bounded}
T.~Tanaka and C.~Langbort.
\newblock The bounded real lemma for internally positive systems and h-infinity
  structured static state feedback.
\newblock {\em IEEE Transactions on Automatic Control}, 56(9):2218--2223, 2011.

\bibitem{weiss2019generalization}
E.~{Weiss} and M.~{Margaliot}.
\newblock A generalization of linear positive systems.
\newblock In {\em 2019 27th Mediterranean Conference on Control and Automation
  (MED)}, pages 340--345, 2019.

\bibitem{weiss2021cooperative}
E.~{Weiss} and M.~{Margaliot}.
\newblock Is my system of odes k-cooperative?
\newblock {\em IEEE Control Systems Letters}, 5(1):73--78, 2021.

\bibitem{willems1976realization}
Jan~C. Willems.
\newblock Realization of systems with internal passivity and symmetry
  constraints.
\newblock {\em Journal of the Franklin Institute}, 301(6):605 -- 621, 1976.

\end{thebibliography}

\end{document}